\definecolor{target}{RGB}{11,130,12}
\newtheorem{theorem}{Theorem}
\newtheorem{lemma}{Lemma}
\newtheorem{corr}{Corollary}
\newtheorem{answer}{Answer}
\newtheorem{definition}{Definition}
\newtheorem{question}{Question}
\newenvironment{proof}{%
    \begin{pf}%
}{%
    \end{pf}%
    \ignorespacesafterend%
}\newcommand{\rvec}{\mathbf{r}}
\newcommand{\rvechat}{\hat{\mathbf{r}}}
\newcommand{\x}{{\bf{x}}}
\newcommand{\vvec}{{\bf{v}}}
\newcommand{\y}{{\bf{y}}}
\newcommand{\z}{{\bf{z}}}
\newcommand{\reals}{\mathbb{R}}
\newcommand{\T}{^{\mbox\footnotesize\sf T}}
\newcommand{\xf}{\x_\mathcal{C}}
\DeclareMathOperator*{\argmin}{arg\,min}
\newcommand{\prettyinf}{\mathop{\mathrm{inf}\vphantom{\mathrm{sup}}}}
\newcommand{\nut}{\nu_{\rm{true}}}
\begin{document}

\begin{frontmatter}
%\runtitle{Insert a suggested running title}  % Running title for regular 
                                              % papers but only if the title  
                                              % is over 5 words. Running title 
                                              % is not shown in output.

\title{One Apollonius Circle is Enough for Many Pursuit-Evasion Games} % Title, preferably not more 
                                                % than 10 words.

\thanks[footnoteinfo]{The authors are listed alphabetically. The first two authors contributed equally and should be considered co-first-authors. This paper was not presented at any IFAC 
meeting. Corresponding author M. Dorothy. Email Address: michael.r.dorothy.civ@army.mil.
We gratefully acknowledge the support of ARL grant ARL DCIST CRA W911NF-17-2-0181. The views expressed in this paper are those of the authors and do not reflect the official policy or position of the United States Government, Department of Defense, or its components.}

%\author[Rome]{Julius Caesar}\ead{julius@caesar.ir}               % e-mail address 

\author[ARL]{Michael Dorothy}
\author[UNCC]{Dipankar Maity}
\author[GMU]{Daigo Shishika}
\author[AFRL]{Alexander Von Moll}

%\address[Rome]{Senate House, Rome}             % full addresses

\address[ARL]{Computational and Information Sciences Directorate, Army Research Laboratory, APG, MD}
\address[UNCC]{Department of Electrical and Computer Engineering, University of North Carolina, Charlotte, NC}
\address[GMU]{Department of Mechanical Engineering, George Mason University, Fairfax, VA}
\address[AFRL]{Control Science Center, Air Force Research Laboratory, WPAFB, OH}

\begin{keyword}                           % Five to ten keywords,  
 							              % chosen from the IFAC 
	Pursuit-evasion,
	Differential games,
	Lyapunov methods
\end{keyword}                             % keyword list or with the 
                                          % help of the Automatica 
                                          % keyword wizard

\begin{abstract}                          % Abstract of not more than 200 words.
This paper investigates obstacle-free simple motion pursuit-evasion problems where the pursuer is faster and game termination is point capture.
It is well known that the interior of the Apollonius Circle (AC) is the evader's dominance region, however, it was unclear whether the evader could reach outside the initial AC without being captured. 
We construct a pursuit strategy that guarantees the capture of an evader within an arbitrarily close neighborhood of the initial AC.
The pursuer strategy is derived by reformulating the game into a nonlinear control problem, and the guarantee holds against any admissible evader strategy.
Our result implies that the evader can freely select the capture location, but only inside the initial AC.
Therefore, a class of problems, including those where the payoff is determined solely based on the location of capture, are now trivial.

\end{abstract}

\end{frontmatter}

\section{Introduction}
In his seminal work, Rufus Isaacs \cite{isaacs1965differential}, among other things, formulated pursuit-evasion scenarios as differential games.
In the decades since, applications of games of this type have grown to include missile defense~\cite{perelman2011cooperative}, football tackling strategy, target/coastline guarding, and more~\cite{shishika2020review}.
Even in Isaacs' work, it was clear that particular circles, associated with Apollonius of Perga and known for some two millenia prior, were an important tool in solving these problems. An Apollonius Circle (AC) is defined as the set of points in a plane that have a specified ratio of distances to two fixed points, known as foci\footnote{For a proof of why this locus is a circle see, e.g., \cite[Appendix 2]{weintraub2020optimalguidance}.}.
When two agents with simple motion (single-integrator kinematics) are able to freely move on a plane, an AC can be drawn at a moment in time with the two foci being the current location of the agents and the ratio of distances corresponding to the ratio of the two agents' maximum velocities. It is well-known that with a faster pursuer, the AC encloses a set of points (``evader dominance region'') that the evader can safely reach before the pursuer can capture it. 

Isaacs noticed the relevance of this result to a class of pursuit evasion games that involve targets. The evader wins a target defense game by reaching the target before being captured by the pursuer, whereas the pursuer wins the game by ensuring that capture occurs first~\cite{venkatesan2014target}. Isaacs' initial result was concerning a convex target, but the target guarding problem has been extended to other variants with targets having various shapes, inspired by and modeling different practical scenarios. 
Other works were motivated by the similar problem of capturing an ``escaping'' agent, e.g., lion-man problem, the target surface enclosed the players~\cite{garcia2019optimal}.

In all of these games, if the AC intersects the target, then the evader is guaranteed to be able to reach the target without being captured by the pursuer.
The evader can simply aim at a point on the target inside of the AC and move with maximum velocity. However, some subtlety exists in the converse: ``can the pursuer ensure capture if the AC does not initially intersect the target?'' The pursuer does not have a fixed point in space that it can reach in order to win; it must end up at the same location where the evader ends up - and the evader gets a choice in the matter!

This subtlety was the motivation for many of the works published in this field, and specific differential games were formulated to treat each problem separately~\cite{lee2021guarding,barron2018reach,bansal2017hamilton}. Even though the standard methods provide strong guarantees in the region of the state space in which the value function is differentiable, special treatments are often required along the singular surfaces.
Even the dispersal surface (see 6.1 of~\cite{isaacs1965differential}), which is considered to be relatively benign (in comparison with e.g., focal, equivocal, switching, and transition surfaces), has been demonstrated to yield some interesting behaviors.
For example, in the two-pursuer one-evader game of \textit{minmax capture time} with simple motion, faster pursuers, and point capture it was shown in~\cite{pachter2019singular} that the canonical pursuer strategy~\cite{isaacs1965differential} results in chattering in the vicinity of the dispersal surface if the evader stands still.
Ultimately, the evader cannot increase its capture time beyond the Value of the game in this attempt to exploit the singularity, however, its ability to induce chattering in the pursuers' headings is undesirable.
Additionally, there is still the issue of what Isaacs referred to as a perpetuated dilemma in which even if the players happen to select the same equilibrium while on the dispersal surface, the state of the system may \textit{remain} in a singular configuration.
In the case of Isaacs' wall pursuit game, a resolution for the perpetuated dilemma has been proposed in~\cite{milutinovic2021rate} which simultaneously addresses the pursuers' chattering.
For target guarding, however, the consequences of non-uniqueness may be more dire.
As we will see in Section~\ref{sec:ex2}, this type of chattering in a pursuer's heading could lead to growth of the AC, even to the point where the evader can win.

We provide a  pursuer control that ensures capture of an evader within an arbitrary close neighborhood of the initial AC. The proposed pursuit strategy provides a satisficing control to solve a broad class of pursuit-evasion games (such as target guarding, reach-avoid) studied in the literature, and is, in many cases, approximately equal to the traditional equilibrium strategies.

The first significance of the result provided in this paper is that the proposed controller solves a wide class of problems without any special treatment.
The second significance of the presented work is that the proposed controller is a continuous state-feedback function.
As we will see, many difficulties come from large changes in heading (i.e., `chattering') that traditional methods may have near singular surfaces.
Our control law is smoothly defined everywhere on the state-space, which prohibits the evader from exploiting any singularity.

\subsection{Motivating Example 1: Pure pursuit allows the AC to bulge out}\label{sec:ex1}
The statement is often made that capture of the evader within the AC (at initial time) is guaranteed without explicitly proposing a pursuit strategy which does so (see, e.g.,~\cite{yan2017defense, garcia2019optimal}).
This statement should be made more precisely: if the pursuer knows a specific trajectory that the evader is going to take or has access to the evader's control at each timestep, then the pursuer can guarantee capture within the original AC~\cite{yan2021cooperative}.

Worse, still, is when an equilibrium strategy associated with capture time is utilized in a game over capture \textit{location} (e.g., implementing pure pursuit in the evader's win region of a circular escape game~\cite{yan2017defense}).
Against pure pursuit (pursuer always moves at maximum velocity directly toward the evader's current position), an evader's reachable set is a strict superset of the AC~\cite{makkapati2019optimal} -- this is of utmost importance in games wherein capture \textit{location} is pertinent (as shown in Fig~\ref{fig:apollonius_leakage}).

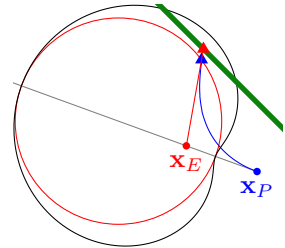
\begin{figure}[htpb]
	\centering
	\begin{tikzpicture}[scale=1]

	\tikzmath{
		\a = 0.70;
		\RHO = \a / (1 - \a^2);
		\th = 80;
		\d = 1;
		\ei = \a * \RHO * cos(\th) + \RHO * sqrt(\a^2 * (cos(\th))^2 - \a^2 + 1);
		\thetapp = 90 - \th;
		\w = (1 - sin(\thetapp)) / cos(\thetapp);
		\tf = (1 + \a * sin(\thetapp)) / (1 - \a^2) * \d;
		\ec = \a * \tf;
		\xpo = \d * cos(\thetapp);
		\globalrotation = 160;
	}

    \coordinate (E) at (0, 0);
    % \coordinate (P) at (-\d, 0);
	\path (E) -- ++(\globalrotation:-\d) coordinate (P);
	\path (E) -- ++(-\th + \globalrotation:\ei) coordinate (I);
	\path [color=red] (E) -- ++(-\th + \globalrotation:\ec) coordinate (C);
	% \node [color=black] at (C) {$\times$};
    % \coordinate (O) at ({\a^2/(1-\a^2)},0);
	\path (E) -- ++(\globalrotation:{\a^2/(1-\a^2)}) coordinate (O);
    % \node [color=green!60!black] at (O) {\large$\times$};
    \draw [color=red] (O) circle [radius=\RHO];

	% Pursuit curve
	% x = cos(t) * Y + sin(t) * X
	% y = sin(t) * Y - cos(t) * X
	\tikzmath{
		\pprotation = \globalrotation - 90;
		\xpf = 0.004 * \xpo;
	}
	\draw [opacity=1,blue, domain=\xpf:\xpo, samples=700, smooth] 
		plot (
			{cos(\thetapp + \pprotation) * (\d * \a / (1 - \a^2) * (1 + \a * sin(\thetapp)) + \d * 0.5 * cos(\thetapp) * (1 / (1 + \a) * \w * (\x / \d / cos(\thetapp))^(1 + \a) - 1 / (1 - \a) / \w * (\x / \d / cos(\thetapp))^(1 - \a))) + sin(\thetapp + \pprotation) * \x},
			{sin(\thetapp + \pprotation) * (\d * \a / (1 - \a^2) * (1 + \a * sin(\thetapp)) + \d * 0.5 * cos(\thetapp) * (1 / (1 + \a) * \w * (\x / \d / cos(\thetapp))^(1 + \a) - 1 / (1 - \a) / \w * (\x / \d / cos(\thetapp))^(1 - \a))) - cos(\thetapp + \pprotation) * \x}
			);

	% Terminal Pursuer position
	\node [regular polygon, regular polygon sides=3, inner sep=1pt, fill, color=blue] at ({cos(\thetapp + \pprotation) * (\d * \a / (1 - \a^2) * (1 + \a * sin(\thetapp)) + \d * 0.5 * cos(\thetapp) * (1 / (1 + \a) * \w * (\xpf / \d / cos(\thetapp))^(1 + \a) - 1 / (1 - \a) / \w * (\xpf / \d / cos(\thetapp))^(1 - \a)))},
		{sin(\thetapp + \pprotation) * (\d * \a / (1 - \a^2) * (1 + \a * sin(\thetapp)) + \d * 0.5 * cos(\thetapp) * (1 / (1 + \a) * \w * (\xpf / \d / cos(\thetapp))^(1 + \a) - 1 / (1 - \a) / \w * (\xpf / \d / cos(\thetapp))^(1 - \a)))}) {};

	% \fill [blue] (P) circle [radius=1pt];
	\node [circle, fill, blue, inner sep=1pt] at (P) {};
	% \fill [red] (E) circle [radius=1pt];
	\node [circle, fill, red, inner sep=1pt] at (E) {};
    \node [blue] at (P) [below,yshift=0pt] {$\x_P$};
	\node [red] at (E) [below,yshift=0pt] {$\x_E$};

	% \fill [black] (O) circle [radius=1pt];
	% \node [circle, fill, black, inner sep=1pt] at (O) {};
	% \node [black] at (O) [below] {$\x_F$};

	% draw envelope of PP captures for straight-line Evader trajectories
	\draw [color=black, fill opacity=0.15, domain=0:360, samples=100, smooth]
		plot (xy polar cs:
		angle={\x + \globalrotation},
		radius={\a * \d * (1 + \a * sin(90 - \x)) / (1 - \a^2)});
	\clip (current bounding box.south west) rectangle (current bounding box.north east);
	% \draw [thin,dashed, blue] (P) -- (I);
	\begin{scope}[on background layer]
		\clip (current bounding box.south west) rectangle (current bounding box.north east);
		\draw [thin, color=gray] (P) -- ($(P)!5!(O)$);
    \end{scope}

	\path (E) -- ($(E)!.85!(C)$) coordinate (T);
	\tikzmath{\tilt = -45;}
	\draw [line width=2pt, color=target] (T) -- ++(\tilt:3);
	\draw [line width=2pt, color=target] (T) -- ++(\tilt:-3);
	\node [regular polygon, regular polygon sides=3, inner sep=1pt, fill, color=red] at (T) {};
	\draw [red] (E) -- (T);

\end{tikzpicture}
	\caption{If the pursuer implements pure pursuit, the evader can reach the target (green) even though the AC (red) does not intersect; the black curve represents locus of pure pursuit capture points for constant evader heading.}
	\label{fig:apollonius_leakage}
\end{figure}

\subsection{Motivating Example 2: Two target problem leading to oscillation and evasion}\label{sec:ex2}

Appendix~\ref{sec:twoGoal} details a target-guarding problem with two simple linear target regions (depicted in Figure~\ref{fig:ex2}). A naive pursuer strategy could consider which target is more `dangerous' and focus on defending that target. The expected behavior from traditional methods is shown in Figure~\ref{fig:ex2}(a), where the game proceeds in a stable fashion toward one of the two targets, and it appears as though neither agent ever has an incentive to choose any other strategy.

\begin{figure}[ht]
    \centering
    \includegraphics[width=0.99\linewidth]{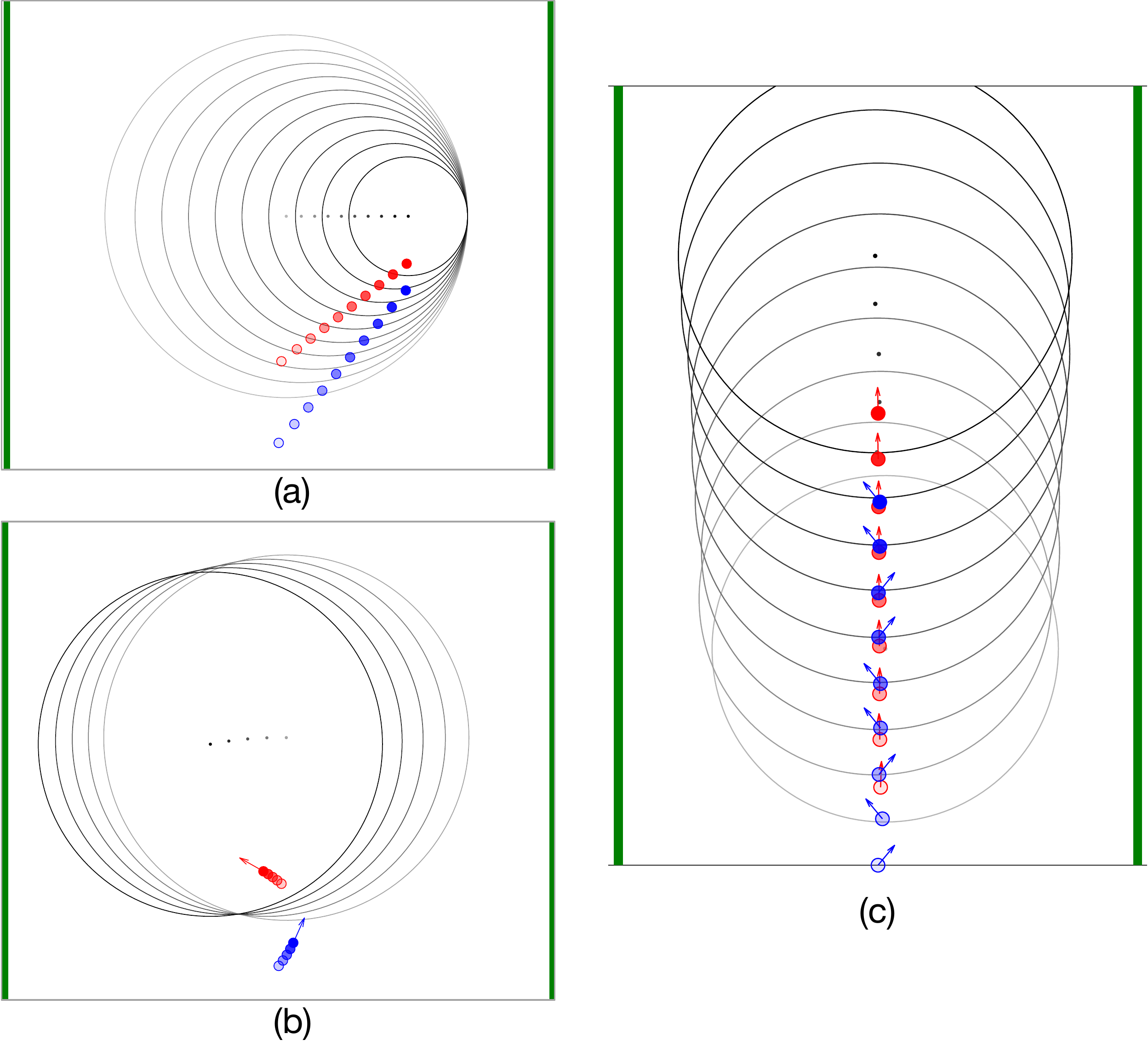}
    \caption{Simulation of a two-target problem. All simulations start from the same initial configuration. (a) Both players employ the equilibrium strategies found from differential game formulation. (b) The evader moves towards left target while the pursuer protects the right target. (c) The evader uses a pure-evasion strategy. }
    \label{fig:ex2}
\end{figure}

Now, consider in Figure~\ref{fig:ex2}(b) the case where the evader heads toward the other target. Like Motivating Example 1, this results in the time-dependent AC moving outside of the original AC. Nevertheless, it can be believed that if the pursuer is focused on the more dangerous target, the evader only loses out by choosing otherwise and that the defender can always switch before the second target becomes more dangerous than the first.

Appendix~\ref{sec:twoGoal} details this naive pursuer strategy based off of traditional methods. It then compares it against a \textit{completely different} admissible evader strategy (go straight up) than any of the strategies identified by traditional methods. The result (depicted in Figure~\ref{fig:ex2}(c)) is that the evader's strategy exploits a singular surface, causes the pursuer's control to chatter, and ultimately results in the evader winning the game.

Therefore, one must be careful when reasoning about equilibrium behaviors in the vicinity of singularities.
In the case of this dispersal surface, it is tempting to believe the evader should only choose to aim towards the left or right goal.
However, from this example, we see that it is imperative to consider the opponent's entire strategy space.
The pursuer must, in general, employ a strategy which guarantees capture against any evader strategy (which the naive, bang-bang pursuer strategy does not).

\section{Main Results}

Define the state of the game at time $t$ as $\x_E(t), \x_P(t) \in \mathbb{R}^2$, i.e., each agent's position in a fixed Cartesian frame.
The agents control their instantaneous velocity: $\dot{\x}_E = \vvec_E$, $\dot{\x}_P = \vvec_P$ which are bounded by $\lVert \vvec_E \rVert \le \nu$ and $\lVert \vvec_P \rVert \le 1$, where $\nu < 1$.  
The game starts at time $t_0$, and terminates when capture occurs: $\x_P = \x_E$.
While the AC was originally constructed as a circle\footnote{
In three dimensional space it becomes a sphere and the presented analysis holds for 3D systems as well.
}~\cite{weintraub2020optimalguidance}, for notational simplification, we define it to be the closed disc 
\begin{equation} \label{eq:ap_circle}
    \mathcal{A}(t) \triangleq \left\{\x : \lVert \x - \x_\mathcal{A}(t) \rVert \leq R_{\mathcal{A}}(t) \right\},
\end{equation}
where $\x_\mathcal{A}(t)= \alpha \x_E(t)-\beta \x_P(t)$, $R_{\mathcal{A}}(t)=\gamma \|\rvec(t)\|$, $\rvec(t)=\x_E(t)-\x_P(t)$,  and $\beta=\nu^2(1-\nu^2)^{-1} = \nu \gamma = \nu^2\alpha$. 
Note that, since $\nu\in[0,1)$, $\alpha\ge \gamma\ge \beta\ge 0$. The traditional AC is the boundary of $\mathcal{A}(t)$, and may be denoted as $\partial \mathcal{A}(t)$.

We claim that there exists a (feedback) pursuer strategy which ensures capture arbitrarily close to $\mathcal{A}(t_0)$ regardless of the strategy of the attacker.
To prove our claim, we construct a fixed disc $\mathcal{C}$ centered at $\xf=\x_\mathcal{A}(t_0)$ with radius $R_\mathcal{C} = R_\mathcal{A}(t_0) + \delta$, where $\delta >0$, i.e.,
\begin{align} \label{eq:capture_region}
    \mathcal C = \{\x : \|\x- \x_\mathcal{C}\| \le R_\mathcal{C}\}. 
\end{align}
We will show that $\mathcal{A}(t)$ will be contained within the interior of $\mathcal{C}$ (denoted $\mathcal{C}^\circ$) for all $t\ge t_0$ and that the distance between the pursuer and the attacker will go to zero in a finite time.

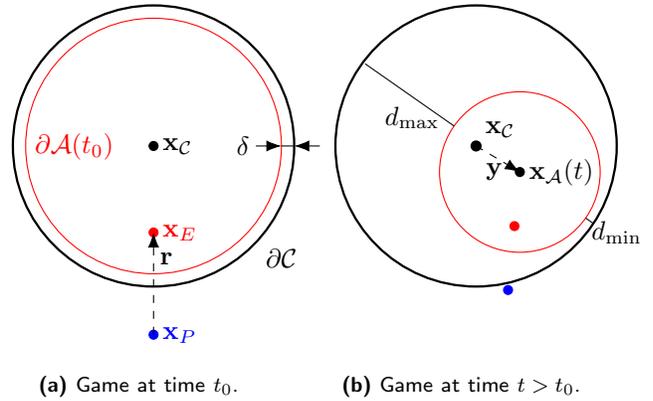
\begin{figure}[h]
\begin{subfigure}{0.4\linewidth}
\begin{tikzpicture}[scale=1.7]
    \tikzmath{\yD = 0; \yA = .8; \tnu=.677; \tdelta=.1;
    % Calc
    \talpha=1/(1-\tnu^2); \tgamma=\talpha*\tnu; \tbeta=\tgamma*\tnu;
    \yC=\talpha*\yA-\tbeta*\yD; \tr=\yA-\yD; \tRC=\tgamma*\tr; \tRF=\tRC+\tdelta;}

	\draw[red,fill] (0,\yA) circle[radius=1pt] node [anchor=west] {$\x_E$};
	\draw[blue,fill](0,\yD) circle[radius=1pt] node [anchor=west] {$\x_P$};
	\draw[black,fill](0,\yC) circle[radius=1pt];
	\draw[-{Latex[length=2mm]},dashed] (0,\yD)--(0,\yA);
	
	\draw[red](0,\yC) circle[radius=\tRC];
	\draw[line width=0.3mm, black] (0,\yC) circle[radius=\tRF];
	
	\node(C) at (\tRC,.6){$\partial\mathcal{C}$};
	\node(A)[red] at (-\tRC,\yC) [anchor=west] {$\partial\mathcal{A}(t_0)$};
	\node(xF) at (0,\yC) [anchor=west] {$\x_\mathcal{C}$};
	\node(delta) at (\tRC-.3,\yC){$\delta$};
	\draw[] (\tRC,\yC)--(\tRF,\yC);
	\draw[-{Latex[length=2mm]}] (\tRC-.2,\yC)--(\tRC,\yC);
	\draw[-{Latex[length=2mm]}] (\tRF+.2,\yC)--(\tRF,\yC);
	
	\node(d) at (.1,\yA-.2){$\rvec$};
	\end{tikzpicture}
	\caption{Game at time $t_0$.}\label{fig:gameInit}
\end{subfigure}
\hspace{20 pt}
\begin{subfigure}{0.4\linewidth}
\begin{tikzpicture}[scale=1.7]
    \tikzmath{\yD = 0; \yA = .8; \tnu=.677; \tdelta=.1;
    % Calc
    \talpha=1/(1-\tnu^2); \tgamma=\talpha*\tnu; \tbeta=\tgamma*\tnu;
    \yC=\talpha*\yA-\tbeta*\yD; \tr=\yA-\yD; \tRC=\tgamma*\tr; \tRF=\tRC+\tdelta;}
    
    \draw[line width=0.3mm, black] (0,\yC) circle[radius=\tRF];
    \draw[black,thick,fill](0,\yC) circle[radius=1pt];
	\node(xF) at (0,\yC) [anchor=south west] {$\x_\mathcal{C}$};
    \tikzmath{\yF=\yC;}
    
    \tikzmath{\yD = 0.35; \yA = .85; \xD=.25; \xA=.3;
    % Calc
    \talpha=1/(1-\tnu^2); \tgamma=\talpha*\tnu; \tbeta=\tgamma*\tnu;
    \yC=\talpha*\yA-\tbeta*\yD; \xC=\talpha*\xA-\tbeta*\xD; \tr=sqrt((\yA-\yD)^2+(\xA-\xD)^2)); \tRC=\tgamma*\tr;}

	\draw[red,fill] (\xA,\yA) circle[radius=1pt];
	\draw[blue,fill](\xD,\yD) circle[radius=1pt];
	\draw[black,fill](\xC,\yC) circle[radius=1pt];
	\draw[white,fill] (0,0) circle [radius=1pt] node [anchor=west] {$\x_P$}; % hackish way to get subfigs vertically aligned
	
	\draw[red](\xC,\yC) circle[radius=\tRC];
	\draw[-{Latex[length=2mm]},dashed] (0,\yF)--(\xC,\yC);
	\node(xF) at (\xC,\yC) [anchor=west] {$\x_\mathcal{A}(t)$};
	\node(y) at (\xC-.2,\yC){$\y$};
	
	\tikzmath{\guessA=-35;}
	\draw (\xC,\yC)++(\guessA+180:\tRC) -- ++(\guessA+180:.85);
	\draw (\xC,\yC)++(\guessA:\tRC) -- ++(\guessA:.075);
	
	\node(dmin) at (1.1,.8){$d_{\min}$};
	\node(dmax) at (-.5,1.7){$d_{\max}$};
	\end{tikzpicture}
	\caption{Game at time $t>t_0$.}\label{fig:gameTwo}
\end{subfigure}

\caption{Depiction of game layout and construction of $\mathcal{C}$}
\label{fig:boundConstruction}
\end{figure}

At time $t$, let us denote the minimum and maximum distances between $\mathcal{A}(t)$ and $\partial\mathcal{C}$ by $d_{\min}(t)$ and $d_{\max}(t)$, respectively. Therefore,
\begin{align} \label{E:d-y}
    d_{\min}(t) &= R_\mathcal{C} - (R_{\mathcal{A}}(t) + \|\y(t)\|),\\
    d_{\max}(t) &= R_\mathcal{C} - (R_{\mathcal{A}}(t)  - \|\y(t)\|),\\
    \y(t) &= \x_\mathcal{A}(t) - \xf.
\end{align}

Showing that $\mathcal{A}(t)\in\mathcal{C}^\circ$ for all $t\ge t_0$ is equivalent to showing $d_{min}(t),d_{max}(t)>0$ for all $t>t_0$. We use the notation $\hat{\x}$ to denote a unit vector along the direction of the vector $\x$.

\begin{theorem} \label{Thm:pursuer_strategy}
The pursuit law
\begin{equation}\label{eq:purStrat}
    \vvec_P = \hat{\z}_P,
\end{equation}
\begin{equation} \label{eq:zd}
    \z_P = (R_\mathcal{C}-R_{\mathcal{A}}(t)) \rvechat(t) + \nu \y(t)
\end{equation}
ensures that 
\begin{enumerate}[label=(\roman*)]
    \item $\mathcal{A}(t)\in\mathcal{C}^\circ,\forall t>t_0$, and
    \item Capture is guaranteed in a finite time $t_{\rm capture} \le t_0 + 2(1+\nu^{-1})R_\mathcal{C}\ln (R_\mathcal{C}/\delta)$.
\end{enumerate}
\end{theorem}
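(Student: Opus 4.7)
The plan is to introduce the Lyapunov candidate $V(t) := (R_\mathcal{C}-R_\mathcal{A}(t))^2 - \|\y(t)\|^2 = d_{\min}(t)\,d_{\max}(t)$, whose positivity is equivalent to statement (i) and whose initial value is $V(t_0)=\delta^2>0$ by construction of $\mathcal{C}$ (so $\y(t_0)=0$ and $\phi(t_0):=R_\mathcal{C}-R_\mathcal{A}(t_0)=\delta$). Using the kinematic identities $\dot R_\mathcal{A} = \gamma\,\rvechat\cdot(\vvec_E-\vvec_P)$ and $\frac{d}{dt}\|\y\| = \hat{\y}\cdot(\alpha\vvec_E-\beta\vvec_P)$, direct differentiation yields $\dot V = 2\vvec_P\cdot M - 2\vvec_E\cdot N$ with $M := \phi\gamma\,\rvechat + \|\y\|\beta\,\hat{\y}$ and $N := \phi\gamma\,\rvechat + \|\y\|\alpha\,\hat{\y}$ (writing $\phi := R_\mathcal{C}-R_\mathcal{A}$). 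Worst-casing the evader by Cauchy--Schwarz against $\|\vvec_E\|\le\nu$ gives $\dot V \ge 2\bigl(\vvec_P\cdot M - \nu\|N\|\bigr)$.

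The decisive observation is that the identity $\beta=\nu\gamma$ makes $M = \gamma\z_P$ \emph{exactly}, so the proposed feedback $\vvec_P=\hat{\z}_P$ is the unit-norm maximizer of $\vvec_P\cdot M$, yielding $\vvec_P\cdot M = \|M\| = \gamma\|\z_P\|$. Expanding $\|\z_P\|^2$ and $\|N\|^2$ via the law of cosines and applying $\nu\alpha=\gamma$ produces the key algebraic identity $\gamma^2\|\z_P\|^2 - \nu^2\|N\|^2 = \gamma^2(1-\nu^2)V$. Factoring the left side as $(\gamma\|\z_P\|-\nu\|N\|)(\gamma\|\z_P\|+\nu\|N\|)$ and bounding the sum via the triangle inequality, $\gamma\|\z_P\|+\nu\|N\| \le \gamma(1+\nu)(\phi+\|\y\|) \le 2\gamma(1+\nu)R_\mathcal{C}$ (using $\|\y\|\le\phi\le R_\mathcal{C}$), together with $\gamma(1-\nu^2)=\nu$, delivers the clean estimate $\dot V \ge \frac{\nu}{(1+\nu)R_\mathcal{C}}\,V$, valid whenever $V\ge 0$.

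Both conclusions then follow. For part (i), since $V(t_0)>0$ and $\dot V\ge 0$ on $\{V\ge 0\}$, that set is positively invariant by a standard bootstrap, so $V(t)>0$ and $\mathcal{A}(t)\subset\mathcal{C}^\circ$ for all $t>t_0$. For part (ii), Gr\"onwall's inequality gives $V(t)\ge \delta^2\exp\!\bigl(\nu(t-t_0)/((1+\nu)R_\mathcal{C})\bigr)$, while the ceiling $V(t)\le \phi(t)^2\le R_\mathcal{C}^2$ forces saturation by $t-t_0 \le \frac{2(1+\nu)R_\mathcal{C}}{\nu}\ln(R_\mathcal{C}/\delta) = 2(1+\nu^{-1})R_\mathcal{C}\ln(R_\mathcal{C}/\delta)$. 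Saturation requires $\phi=R_\mathcal{C}$, i.e., $R_\mathcal{A}=\gamma\|\rvec\|=0$, which is precisely capture.

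The main obstacle will be (a) guessing the Lyapunov function $V = (R_\mathcal{C}-R_\mathcal{A})^2-\|\y\|^2$, equivalently $d_{\min}d_{\max}$, and (b) recognizing that the proposed $\hat{\z}_P$ is the worst-case maximizer for this $V$ (i.e., that $M=\gamma\z_P$). Once those are in place, the proof is driven entirely by the AC-specific identities $\beta=\nu\gamma$, $\nu\alpha=\gamma$, and $\gamma(1-\nu^2)=\nu$, which conspire to produce exactly the exponent $\nu/((1+\nu)R_\mathcal{C})$ in the capture-time bound. The only slack in the argument is the triangle-inequality bound on $\phi+\|\y\|\le 2R_\mathcal{C}$; that step is what pins down the factor $(1+\nu^{-1})$ in the stated time estimate.
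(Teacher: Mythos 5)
Your proposal is correct and follows essentially the same route as the paper's proof: the same Lyapunov-like function $V=d_{\min}d_{\max}=(R_\mathcal{C}-R_{\mathcal{A}})^2-\|\y\|^2$ with $V(t_0)=\delta^2$, the same expansion of $\dot V$ into a $\vvec_P$-term and a $\vvec_E$-term (your $M=\gamma\z_P$ and $N=\alpha\z_E$), the same Cauchy--Schwarz worst-casing of the evader, the same difference-of-squares identity $\gamma^2\|\z_P\|^2-\nu^2\|N\|^2=\gamma^2(1-\nu^2)V$, and the same Gr\"onwall-type exponential lower bound played against the ceiling $V\le R_\mathcal{C}^2$ to get the stated capture time. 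The one place you diverge is the well-definedness of the feedback $\vvec_P=\hat{\z}_P$: the paper devotes a separate contradiction argument to showing $\|\z_P(t)\|>0$ (dotting $\z_P(\tau)=0$ with $\rvechat(\tau)$ to force $d_{\min}(\tau)\le 0$), whereas your key identity yields it almost for free, since $\gamma^2\|\z_P\|^2=\nu^2\|N\|^2+\gamma^2(1-\nu^2)V\ge\gamma^2(1-\nu^2)V>0$ on the set $\{V>0\}$ where your bootstrap runs; you should state this one line explicitly, because the invariance argument needs the closed-loop flow (hence the feedback) to be defined along the trajectory, and as written you only assert that $\hat{\z}_P$ is ``the unit-norm maximizer'' without ruling out $\z_P=0$. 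A second cosmetic point: writing $\frac{d}{dt}\|\y\|=\hat{\y}\cdot(\alpha\vvec_E-\beta\vvec_P)$ is ill-posed at $\y(t_0)=0$; differentiate $\|\y\|^2$ as the paper does (your final expressions for $M$, $N$ already absorb the factor $\|\y\|$, so nothing downstream changes).
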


\begin{proof}
\begin{enumerate}[wide, labelindent=0pt, label=(\roman*)]
    \item We consider the Lyapunov-like function $V(t) \triangleq d_{\max}(t)d_{\min}(t)$ and will show that $V(t) >0$ for all $t$.
In particular, $d_{\min}(t) > 0$ for all $t$, which is necessary and sufficient to conclude that $\mathcal{A}(t)$ remains within $\mathcal{C}^\circ$ for all time. 
To show this, we consider the time derivative of $V$ as follows:
\begin{align*}
    V(t) &= d_{\max}(t)d_{\min}(t) = (R_\mathcal{C}-R_{\mathcal{A}}(t))^2 -\|\y(t)\|^2\\
    \dot V(t) &= -2(R_\mathcal{C}-R_{\mathcal{A}}(t))\dot{R}_\mathcal{A}(t) -2\y(t)\T \dot{\y}(t)\\
    &=-2(R_\mathcal{C}-R_{\mathcal{A}}(t))\gamma \rvechat(t)\T \dot \rvec(t) - 2\y(t)\T \dot\x_\mathcal{A}(t)\\
    &= -2(R_\mathcal{C}-R_{\mathcal{A}}(t))\gamma \rvechat(t)\T(\vvec_E(t) -\vvec_P(t)) \\
    &\quad- 2\y(t)\T (\alpha \vvec_E(t) - \beta \vvec_P(t))\\
    &= 2\gamma\left[(R_\mathcal{C}-R_{\mathcal{A}}(t))\hat {\rvec}(t) + \nu \y(t) \right]\T \vvec_P(t) \\
    &\quad - 2\alpha\left[\nu (R_\mathcal{C}-R_{\mathcal{A}}(t))\hat {\rvec}(t) +  \y(t) \right]\T \vvec_E(t).
\end{align*}
For convenience, denote $\z_E(t)=\nu (R_\mathcal{C}-R_{\mathcal{A}}(t))\hat {\rvec}(t) +  \y(t)$. Now,
\begin{align*}
    \dot V(t) &= 2\gamma \z_P(t)\T \vvec_P(t) - 2\alpha \z_E(t)\T \vvec_E(t) \\
    &\ge 2\gamma \z_P(t)\T \vvec_P(t) - 2\alpha \|\z_E(t)\|\nu 
\end{align*}
where the inequality is obtained by noticing that the vector $\vvec_E(t)$ has a maximum magnitude of $\nu$ and using the Cauchy-Schwarz inequality. Note that this step does not require the evader to choose any particular policy; it is simply a worst-case analysis from the perspective of the pursuer.
Assuming for the moment that $\|\z_P(t)\|>0$ for all $t\ge t_0$, we can choose the pursuer strategy $\vvec_P = \hat{\z}_P(t)$, which gives us
\begin{align}
    \dot V(t) &\ge 2\gamma (\|\z_P(t)\| -\|\z_E(t)\|) = 2\gamma \frac{\|\z_P(t)\|^2 -\|\z_E(t)\|^2}{\|\z_P(t)\| + \|\z_E(t)\|} \nonumber\\
    &= 2\gamma \frac{(1-\nu^2) ((R_\mathcal{C}-R_{\mathcal{A}}(t))^2-\|\y(t)\|^2)}{\|\z_P(t)\| + \|\z_E(t)\|}\nonumber\\
    &= 2\nu\frac{V(t)}{\|\z_P(t)\| + \|\z_E(t)\|}.\label{eq:Vdot}
\end{align}
Therefore, as long as  $\|\z_P(t)\|>0$, the pursuer's policy is well-defined and $\dot V(t) \ge 0$. Hence, $V(t)$ is non-decreasing. Next, we will prove that $\|\z_P(t)\|>0$ for all $t\ge t_0$. We will proceed by contradiction.
First, note that $\|\z_P(t_0)\|=\delta>0$ since $\y(t_0)=0$ and $R_\mathcal{C}=R_\mathcal{A}(t_0)+\delta.$ 
Now, suppose that $\tau >t_0$ be the first time instance when $\|\z_P(\tau)\|=0$.
Therefore, from \eqref{eq:zd} 
\begin{align*}
    &(R_\mathcal{C}-R_\mathcal{A}(\tau))\rvechat(\tau) + \nu \y(\tau) = 0\\
    \implies & \rvechat\T(\tau) \left((R_\mathcal{C}-R_\mathcal{A}(\tau))\rvechat(\tau) + \nu \y(\tau)\right) = 0\\
  \implies &  R_\mathcal{C}-R_\mathcal{A}(\tau) + \nu \rvechat(\tau)\T \y(\tau) = 0\\
  \implies &  d_{\min}(\tau) + \|\y(\tau)\| + \nu \rvechat(\tau)\T \y(\tau) = 0\\
  \implies & d_{\min}(\tau) = - \|\y(\tau)\| - \nu \rvechat(\tau)\T \y(\tau) \\
  & \qquad \quad\le - \|\y(\tau)\| + \nu \|\y(\tau)\| \le 0. 
\end{align*}
The condition $d_{\min}(\tau) \le 0$ implies that there must have been a time in the interval $[t_0, \tau]$ when $d_{\min}$ becomes zero and thus $V$ becomes zero. 
This is not possible since $V(t_0)=\delta^2$ and $\dot V$ has been positive for the entire interval $[t_0,\tau)$ due to the hypothesis that $\|\z_P(t)\|>0$ for all $t \in [t_0,\tau)$.
This leads to a contradiction, and hence, $\|\z_P(t)\|$ cannot be zero for any time $t \ge t_0$.  \hfill $\qed$
\vspace{.5cm}
\item From (i), we have $R_\mathcal{C}\ge R_\mathcal{A}(t)$ for all $t\ge t_0$. Notice that $\|\z_P(t)\| \le (R_\mathcal{C}-R_{\mathcal{A}}(t))+\nu \|\y\| \le (1+\nu)R_\mathcal{C}$ and similarly $\|\z_E(t)\| \le \nu(R_\mathcal{C}-R_{\mathcal{A}}(t))+\|\y\| \le (1+\nu)R_\mathcal{C}$ for all $t$.
From \eqref{eq:Vdot}, we obtain
\begin{align*}
    &\dot V \ge 2\nu\frac{V(t)}{\|\z_P(t)\| + \|\z_E(t)\|} \ge \nu\frac{V(t)}{(1+\nu)R_\mathcal{C}}\\
    &V(t) \ge V(t_0)\exp{\frac{\nu(t-t_0)}{(1+\nu)R_\mathcal{C}}}.
\end{align*}
Since $V(t)=R_\mathcal{C}^2$ implies capture, the theorem holds.\hfill  \qed 
\end{enumerate}
\end{proof}

The bound on capture time in Theorem~\ref{Thm:pursuer_strategy}(ii) is a conservative bound, and its purpose is to show that the evader cannot delay the capture time indefinitely. 
The exact capture time, or a tighter bound thereof, can be obtained by considering an optimal  control problem. Theorem~\ref{Thm:pursuer_strategy}(i)'s spatial bound of capture within $\mathcal{C}$ can also be made tighter.

\begin{corr}
Although we inflate the initial AC radius by $\delta$ to construct the region $\mathcal{C}$, it is guaranteed that capture will occur within a $\delta_1 $ neighborhood of the initial AC, where $\delta_1 = R_\mathcal{C} - \sqrt{R_\mathcal{C}^2 -\delta^2} < \delta$. 
\end{corr}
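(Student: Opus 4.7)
The plan is to apply the Lyapunov invariant established in Theorem~\ref{Thm:pursuer_strategy}(i) at the terminal instant. Let $t_c$ denote the (finite) capture time guaranteed by Theorem~\ref{Thm:pursuer_strategy}(ii), and set $\x_c := \x_P(t_c) = \x_E(t_c)$. First I would evaluate the dynamic AC geometry at $t_c$: since $\|\rvec(t_c)\| = 0$, we immediately have $R_\mathcal{A}(t_c) = 0$; furthermore the identity $\alpha - \beta = 1$, immediate from $\alpha = (1-\nu^2)^{-1}$ and $\beta = \nu^2(1-\nu^2)^{-1}$, gives $\x_\mathcal{A}(t_c) = (\alpha-\beta)\x_c = \x_c$, so that $\y(t_c) = \x_c - \xf$ is precisely the displacement from the center of $\mathcal{C}$ to the capture point.

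Next I would invoke the fact, established in the proof of Theorem~\ref{Thm:pursuer_strategy}(i), that $V(t) = (R_\mathcal{C}-R_\mathcal{A}(t))^2 - \|\y(t)\|^2$ is non-decreasing along the closed-loop trajectory. Therefore $V(t_c) \ge V(t_0) = \delta^2$, and substituting the terminal values obtained above yields $R_\mathcal{C}^2 - \|\x_c - \xf\|^2 \ge \delta^2$, i.e.
\[
\|\x_c - \xf\| \;\le\; \sqrt{R_\mathcal{C}^2 - \delta^2} \;=\; R_\mathcal{C} - \delta_1.
\]
Geometrically, the capture point lies in the concentric closed disc of radius $R_\mathcal{C}-\delta_1$ about $\xf$ -- a region strictly contained in $\mathcal{C}$ and staying clear of $\partial\mathcal{C}$ by at least $\delta_1$ -- which is the asserted neighborhood improvement over the raw containment $\mathcal{A}(t)\subset\mathcal{C}^\circ$ provided by the theorem.

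To close, the strict inequality $\delta_1 < \delta$ is verified by squaring $\sqrt{R_\mathcal{C}^2-\delta^2} > R_\mathcal{C}-\delta$, which reduces to $\delta < R_\mathcal{C}$, true by construction of $\mathcal{C}$. I do not foresee a genuine obstacle here: the substantive work, namely the monotonicity of $V$, is already accomplished in the main theorem, and the corollary amounts to reading off the value of that invariant at $t_c$. The only step worth explicit mention is the identification $\x_\mathcal{A}(t_c) = \x_c$ via the algebraic identity $\alpha-\beta = 1$; once that is observed, the geometric bound drops out.
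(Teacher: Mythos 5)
Your proof is correct and follows essentially the same route as the paper: both exploit the monotonicity of $V$ established in Theorem~\ref{Thm:pursuer_strategy} evaluated at the capture instant, where $R_\mathcal{A}=0$ and the AC degenerates to the capture point. Your bound $\|\y(t_f)\|\le\sqrt{R_\mathcal{C}^2-\delta^2}$ is algebraically identical to the paper's statement $d_{\min}(t_f)\bigl(2R_\mathcal{C}-d_{\min}(t_f)\bigr)\ge\delta^2$ with $d_{\min}(t_f)=R_\mathcal{C}-\|\y(t_f)\|$, so the two arguments coincide (your explicit check that $\x_\mathcal{A}(t_f)=\x_E(t_f)$ via $\alpha-\beta=1$ is a nice touch the paper leaves implicit).
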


\begin{proof}
At the final time, $d_{\min}(t_f) +d_{\max}(t_f)= 2R_\mathcal{C}$ and thus $V(t_f) = d_{\min}(t_f)d_{\max}(t_f)=d_{\min}(t_f)(2R_\mathcal{C}-d_{\min}(t_f))\ge V(t_0)=\delta^2$. Define $\delta_1$ to be the smallest possible $d_{\min}(t_f)$, and the corollary follows. \hfill $\qed$ 
\end{proof}

The pursuit strategy \eqref{eq:purStrat} guarantees that the distance between the pursuer and the evader at time $t>t_0$ is strictly smaller that their initial distance regardless of the evader's strategy.

\begin{lemma}\label{lm:distance}
For all $t > t_0$, $\|\rvec(t)\| < \|\rvec(t_0)\|$.
\end{lemma}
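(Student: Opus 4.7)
The plan is to leverage the Lyapunov-like function $V(t) = d_{\max}(t)\,d_{\min}(t) = (R_\mathcal{C} - R_\mathcal{A}(t))^2 - \|\y(t)\|^2$ that was already analyzed in the proof of Theorem~\ref{Thm:pursuer_strategy}. The key observation is that the conclusion of the lemma, $\|\rvec(t)\| < \|\rvec(t_0)\|$, is equivalent (up to the positive factor $\gamma$) to $R_\mathcal{A}(t) < R_\mathcal{A}(t_0)$, and this comparison can be read off directly from a \emph{strict} lower bound on $V(t)$.

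First, I would invoke the estimate already established in the proof of Theorem~\ref{Thm:pursuer_strategy}(ii), namely $V(t) \ge V(t_0)\exp\!\bigl(\nu(t-t_0)/((1+\nu)R_\mathcal{C})\bigr)$ with $V(t_0) = \delta^2$ (since $\y(t_0)=0$ and $R_\mathcal{C}-R_\mathcal{A}(t_0)=\delta$). In particular, for every $t>t_0$,
\begin{equation*}
  V(t) \;=\; (R_\mathcal{C}-R_\mathcal{A}(t))^2 - \|\y(t)\|^2 \;>\; \delta^2.
\end{equation*}
Rearranging and using $\|\y(t)\|^2\ge 0$ yields $(R_\mathcal{C}-R_\mathcal{A}(t))^2 > \delta^2$.

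Next, from Theorem~\ref{Thm:pursuer_strategy}(i) we already know $\mathcal{A}(t)\subset\mathcal{C}^\circ$, which forces $R_\mathcal{C}-R_\mathcal{A}(t) > 0$. Hence I may take the positive square root to obtain $R_\mathcal{C}-R_\mathcal{A}(t) > \delta = R_\mathcal{C}-R_\mathcal{A}(t_0)$, i.e., $R_\mathcal{A}(t) < R_\mathcal{A}(t_0)$. Substituting $R_\mathcal{A}(t)=\gamma\|\rvec(t)\|$ and dividing by $\gamma>0$ gives $\|\rvec(t)\| < \|\rvec(t_0)\|$, which is exactly the claim.

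There is essentially no hard step here: the lemma is a clean corollary of the monotonicity of $V$ already proved in Theorem~\ref{Thm:pursuer_strategy}. The only minor point to be careful about is the strictness of the inequality for $t>t_0$ (as opposed to $t\ge t_0$); this strictness is guaranteed because $\dot V(t)\ge 2\nu V(t)/(\|\z_P(t)\|+\|\z_E(t)\|) > 0$ whenever $V(t)>0$, so $V$ is strictly increasing on $[t_0, t_{\rm capture})$, and thus $V(t) > V(t_0)$ for all $t>t_0$.
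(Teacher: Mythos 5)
Your proposal is correct and follows essentially the same route as the paper's own proof: both use the strict monotonicity of $V(t)=(R_\mathcal{C}-R_\mathcal{A}(t))^2-\|\y(t)\|^2$ from Theorem~\ref{Thm:pursuer_strategy} together with $\y(t_0)=0$ and $R_\mathcal{A}(t)<R_\mathcal{C}$ to conclude $R_\mathcal{A}(t)<R_\mathcal{A}(t_0)$ and hence $\|\rvec(t)\|<\|\rvec(t_0)\|$. Your explicit justification of strictness via the exponential bound is a slightly more detailed spelling-out of what the paper leaves implicit, but it is the same argument.
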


\begin{proof}Without loss of generality, we assume that capture has not happened by time $t$.
From the proof of Theorem~\ref{Thm:pursuer_strategy}, we notice that $V(t) > V(t_0)$ for all $t$. 
Therefore, $(R_\mathcal{C}-R_{\mathcal{A}}(t))^2-\|\y(t)\|^2 > (R_\mathcal{C}-R_\mathcal{A}(t_0))^2-\|\y(t_0)\|^2$. 
Given that $\y(t_0)=0$ and Theorem~\ref{Thm:pursuer_strategy} ensuring that $R_{\mathcal{A}}(t) < R_\mathcal{C}$ for all $t$, we obtain from the above inequality that $R_\mathcal{C}-R_{\mathcal{A}}(t) > R_\mathcal{C}- R_\mathcal{A}(t_0)$, or equivalently, $R_{\mathcal{A}}(t) < R_\mathcal{A}(t_0)$.
Then, the Lemma is obtained by using the fact that $R_{\mathcal{A}}(t) = \gamma \|\rvec(t)\|$ for all $t$.
\end{proof}

\begin{rem}
Since $\|\z_P(t_0)\|>0$, the pursuer policy~\eqref{eq:purStrat} is a continuous function of the state, is always well-defined, and is continuous with respect to time.
\end{rem}

This vital property ensures that no singular surface exists, sidestepping any of the issues described in Appendix~\ref{sec:twoGoal} and present in much of the prior literature.

\begin{corr}[Mobile Sensing Game]
In limited-information games, where the pursuer has a circular sensing distance $\rho\ge ||\rvec(t_0)||$, it is guaranteed to capture the evader using the strategy outlined in Theorem~\ref{Thm:pursuer_strategy} without losing sensing of the evader.
\end{corr}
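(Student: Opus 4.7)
The plan is to observe that this corollary is essentially an immediate consequence of Lemma~\ref{lm:distance} combined with the capture guarantee of Theorem~\ref{Thm:pursuer_strategy}. The pursuer's feedback law \eqref{eq:purStrat} depends only on the current state $(\x_P, \x_E)$, so as long as the evader remains within the sensing disc of radius $\rho$ about $\x_P(t)$, the pursuer can continue to implement the control, and Theorem~\ref{Thm:pursuer_strategy}(ii) then delivers capture in finite time.

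First I would invoke Lemma~\ref{lm:distance}, which asserts that $\|\rvec(t)\| < \|\rvec(t_0)\|$ for every $t > t_0$ at which capture has not yet occurred. Combined with the hypothesis $\rho \ge \|\rvec(t_0)\|$, this yields
\begin{equation*}
    \|\rvec(t)\| \;<\; \|\rvec(t_0)\| \;\le\; \rho
    \qquad \text{for all } t > t_0,
\end{equation*}
so the evader never exits the pursuer's sensing disc. At $t = t_0$ the inequality $\|\rvec(t_0)\| \le \rho$ holds by assumption, so sensing is maintained on $[t_0, t_{\rm capture}]$.

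Next I would note that because sensing is never lost, the pursuer has access to $\x_E(t)$ throughout the game and can therefore evaluate $\y(t)$ and $\z_P(t)$ needed for \eqref{eq:purStrat}--\eqref{eq:zd}. Hence the control law is implementable for all $t \in [t_0, t_{\rm capture}]$, and Theorem~\ref{Thm:pursuer_strategy}(ii) guarantees capture within the stated finite time bound.

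There is essentially no hard step here: the work was already done in proving Lemma~\ref{lm:distance}, which itself follows from the monotonicity $V(t) \ge V(t_0)$ established in the proof of Theorem~\ref{Thm:pursuer_strategy}(i) together with the fact that $\y(t_0)=0$. The only subtlety worth pointing out is that Lemma~\ref{lm:distance} gives a \emph{strict} inequality for $t > t_0$, which is exactly what is needed to assert that the evader stays strictly inside the sensing disc up to (and including, by continuity) the capture instant.
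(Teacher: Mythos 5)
Your proposal is correct and follows exactly the paper's route: the paper likewise states that the corollary follows directly from Lemma~\ref{lm:distance}, since $\|\rvec(t)\| < \|\rvec(t_0)\| \le \rho$ ensures sensing is never lost while Theorem~\ref{Thm:pursuer_strategy}(ii) delivers finite-time capture. Your additional remark about implementability of the feedback law and the strictness of the inequality is a faithful elaboration of the same argument, not a different approach.
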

The proof follows directly from Lemma~\ref{lm:distance}. 

\begin{rem}\label{rem:lowSpeed}
If the evader employs any speed below $\nu$,
then all the guarantees in this section hold.
\end{rem}

At first, this may seem like a trivial observation, since choosing speeds below $\nu$ are explicitly in the admissible set. However, we will discuss implications of this along with other implications of the main results in the next section.

\section{Implications}

This section provides corollaries, in the context of Games of Final Location and Target Guarding problems, that immediately follow from our main result.

\begin{definition}[Game of Final Location]
Consider a continuous value field $\phi: \reals^2 \mapsto \reals$ defined over the planar environment.
For the pursuer-winning scenario, we define the \emph{Game of Final Location} as the Game of Degree with the terminal payoff given as
\begin{equation}
    P_P(\vvec_P, \vvec_E) = -P_E(\vvec_P, \vvec_E) = \phi(\x_E(t_f)),
\end{equation}
where $t_f$ is the terminal time, and $\x_E(t_f)$ denotes the location where capture occurs. In other words, the pursuer seeks to maximize $\phi(\x_E(t_f))$, while the evader seeks to minimize it.
\end{definition}

Define the minimum payoff inside the initial AC as
\begin{equation}
    \phi^*\triangleq\min_{\x\in\mathcal{A}(t_0)}\phi(\x)
\end{equation}
and $\mathcal{X}^*\triangleq\argmin_{\x\in\mathcal{A}(t_0)}\phi(\x)$ as the set of corresponding locations. 

\begin{corr}\label{cor:epsNash}
For any $\varepsilon>0$, there is a set of strategies where the pursuer follows \eqref{eq:purStrat} and the evader moves directly to some $\x\in\mathcal{X}^*$ which is an $\varepsilon$-equilibrium strategy pair.
\end{corr}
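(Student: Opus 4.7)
The plan is to construct a specific strategy pair parameterized by the pursuer's design constant $\delta>0$, and then verify the two $\varepsilon$-Nash inequalities using Theorem~\ref{Thm:pursuer_strategy} to bound the evader's possible gain and the evader-dominance property of $\mathcal{A}(t_0)$ to bound the pursuer's. Fix $\varepsilon>0$ and pick any $\x^*\in\mathcal{X}^*$. The evader strategy is ``move in a straight line toward $\x^*$ at speed $\nu$ and stop on arrival''; the pursuer strategy is \eqref{eq:purStrat}. The equilibrium payoff will come out to be exactly $\phi^*$.

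To pick $\delta$, I would invoke uniform continuity of $\phi$ on the compact set $\mathcal{C}$: there exists $\eta>0$ with $\|y-y'\|\le\eta$ implying $|\phi(y)-\phi(y')|<\varepsilon$. Choose $\delta$ small enough that the spatial tolerance from the preceding Corollary, $\delta_1=R_\mathcal{C}-\sqrt{R_\mathcal{C}^2-\delta^2}$, satisfies $\delta_1<\eta$. Because $\mathcal{A}(t_0)$ is convex and contains both $\x_E(t_0)$ and $\x^*$, the entire straight-line evader path lies in $\mathcal{A}(t_0)$, so the standard Apollonius argument precludes any unit-speed pursuer from intercepting the evader strictly inside $\mathcal{A}(t_0)^\circ$. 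Hence capture can occur only at $\x^*$ itself, where the evader halts; Theorem~\ref{Thm:pursuer_strategy}(ii) together with Remark~\ref{rem:lowSpeed} (applied to zero evader speed) then guarantees capture at $\x^*$ in finite time, yielding payoff $\phi(\x^*)=\phi^*$.

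For the evader-deviation inequality, Theorem~\ref{Thm:pursuer_strategy}(i) forces the capture point into $\mathcal{C}$ against any admissible evader strategy, and by the choice of $\eta$ the resulting payoff is at least $\phi^*-\varepsilon$, so the evader cannot decrease the payoff by more than $\varepsilon$. For the pursuer-deviation inequality, the same dominance-region argument says that against the committed straight-line evader, no pursuer strategy can force capture at any point other than $\x^*$, so the pursuer's payoff is bounded above by $\phi^*$ regardless of deviation. Combining the two inequalities yields the $\varepsilon$-Nash property.

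The main obstacle will be justifying that capture under the strategy pair really occurs at $\x^*$, rather than merely somewhere in $\mathcal{C}$: the pursuer's feedback law \eqref{eq:purStrat} is not pure pursuit, and its trajectory couples $\rvec(t)$ and $\y(t)$ in a way that is not immediately transparent. The resolution, as sketched above, is to combine the impossibility of interception strictly inside $\mathcal{A}(t_0)^\circ$ with Theorem~\ref{Thm:pursuer_strategy}'s finite-time guarantee applied to a stationary target once the evader halts at $\x^*$. The boundary case $\x^*\in\partial\mathcal{A}(t_0)$ requires no separate treatment: interception is still ruled out on the open segment of the evader's trajectory, and $\x^*$ is reached in finite time by the straight-line strategy.
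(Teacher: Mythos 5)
Your proposal is correct and takes essentially the same approach as the paper: evader dominance of $\mathcal{A}(t_0)$ (reach a point of $\mathcal{X}^*$ and wait) caps any pursuer deviation at $\phi^*$, while Theorem~\ref{Thm:pursuer_strategy} plus continuity of $\phi$ under a sufficiently small $\delta$ lower-bounds the payoff against any evader deviation by $\phi^*-\varepsilon$. Your extra work pinning the on-path capture point to exactly $\x^*$ is a harmless refinement that the paper leaves implicit.
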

\begin{proof}
First, recall that the evader is dominant for any point inside  $\mathcal{A}(t_0)$.
That is, without regard to the pursuer's strategy, the evader can set an open loop strategy to travel directly toward any point in $\mathcal{A}(t_0)$ at max speed, and it will reach the point before the pursuer can capture it. Particularly, the evader can reach any point in $\mathcal{X}^*\subset\mathcal{A}(t_0)$. Then, the evader can simply wait at such a point until capture. Therefore, $\phi(\x_E(t_f))\leq\phi^*$.

Conversely, Theorem~\ref{Thm:pursuer_strategy} implies that, regardless of evader strategy, capture will occur within $\mathcal{C}^\circ$. Due to the continuity property of $\phi$, for any $\varepsilon>0$, there always exists a $\delta>0$ such that the pursuer's strategy enforces $\phi(\x_E(t_f))\ge\min_{\x\in\mathcal{C}}\phi(\x)\ge\phi^*-\varepsilon$.
\hfill $\qed$
\end{proof}

The following Corollary establishes the Value of the Game of Final Location by first deriving Upper and Lower Value functions and then showing that they can be made to be equal~\cite{bansal2017hamilton,friedman1970definition}.

\begin{corr}\label{cor:gameOfLocation}
The Value of the Game of Final Location is $\phi^*$.
\end{corr}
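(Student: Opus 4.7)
The plan is to establish the Value by bounding the upper and lower values $V^+$ and $V^-$ separately and showing both equal $\phi^*$, following the standard framework in~\cite{friedman1970definition,bansal2017hamilton}, and then applying the inequality $V^-\leq V^+$ that always holds.

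First, I would show the upper bound $V^+\leq\phi^*$. Pick any point $\x^*\in\mathcal{X}^*$; since $\x^*\in\mathcal{A}(t_0)$, the evader can execute the open-loop straight-line strategy toward $\x^*$ at maximum speed $\nu$. By the defining dominance property of the AC, the evader reaches $\x^*$ no later than the pursuer. Upon arrival the evader switches to speed $0$, which is admissible by Remark~\ref{rem:lowSpeed}, so capture must occur at $\x^*$. Hence this evader policy, against any pursuer strategy, forces $\phi(\x_E(t_f))=\phi(\x^*)=\phi^*$, and thus $V^+\leq\phi^*$.

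Next, I would show the lower bound $V^-\geq\phi^*$ via a continuity-and-approximation argument. Fix any $\varepsilon>0$. By uniform continuity of $\phi$ on a fixed compact neighborhood of $\mathcal{A}(t_0)$, there exists $\eta>0$ such that $\|\x-\x'\|\leq\eta$ implies $|\phi(\x)-\phi(\x')|<\varepsilon$. Choose $\delta>0$ small enough that the bound $\delta_1 = R_\mathcal{C}-\sqrt{R_\mathcal{C}^2-\delta^2}$ from the earlier Corollary satisfies $\delta_1\leq\eta$, and let the pursuer play strategy~\eqref{eq:purStrat} with this $\delta$. By Theorem~\ref{Thm:pursuer_strategy}(ii) capture occurs in finite time, and by the earlier Corollary the capture point lies within distance $\delta_1$ of $\mathcal{A}(t_0)$. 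Hence there exists $\x\in\mathcal{A}(t_0)$ with $\|\x_E(t_f)-\x\|\leq\delta_1\leq\eta$, so $\phi(\x_E(t_f))\geq\phi(\x)-\varepsilon\geq\phi^*-\varepsilon$. Because this estimate holds against every admissible evader response, we obtain $V^-\geq\phi^*-\varepsilon$, and since $\varepsilon$ is arbitrary, $V^-\geq\phi^*$.

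Combining the two bounds yields $\phi^*\leq V^-\leq V^+\leq\phi^*$, which collapses to $V^-=V^+=\phi^*$, so the Value exists and equals $\phi^*$. The main subtlety I would need to address is that strategy~\eqref{eq:purStrat} depends on the tuning parameter $\delta$, so the pursuer's feedback policy achieving the lower value is really a parametrized family, one per $\varepsilon$, rather than a single saddle-point strategy; this is the reason the result is stated as an $\varepsilon$-equilibrium in Corollary~\ref{cor:epsNash} and as a Value (not a saddle point) here, and it is compatible with the upper/lower value definitions of~\cite{friedman1970definition,bansal2017hamilton} since those suprema and infima range over admissible strategies without requiring a single policy to achieve the optimum uniformly in $\varepsilon$.
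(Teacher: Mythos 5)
Your proposal is correct and follows essentially the same route as the paper: bound the Upper Value by the evader's straight-line strategy to a point of $\mathcal{X}^*$, bound the Lower Value by the family of $\delta$-parametrized pursuer strategies from Theorem~\ref{Thm:pursuer_strategy} together with continuity of $\phi$ (the paper invokes Corollary~\ref{cor:epsNash} for both steps, while you spell the continuity argument out via the $\delta_1$ bound), and close with $\underline{V}\leq\overline{V}$. Your closing remark about the supremum ranging over the $\delta$-family rather than a single saddle-point strategy is exactly the caveat the paper itself notes.
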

\begin{proof}
The Upper Value of the game is
\begin{equation}
    \overline{V} \triangleq \prettyinf_{\vvec_E} \sup_{\vvec_P} P_P(\vvec_P, \vvec_E) = \phi^*,
\end{equation}
following directly from the first inequality in the proof of Corollary~\ref{cor:epsNash}. The Lower Value of the game is%\footnote{}
\begin{equation}
    \underline{V} \triangleq \sup_{\vvec_P} \prettyinf_{\vvec_E} P_P(\vvec_P, \vvec_E) \ge \sup_{\vvec_P} (\phi^*-\varepsilon) = \phi^*.
\end{equation}
Note that while there is no single $\delta$-strategy for the pursuer to guarantee that a particular payoff is $\phi^*$, the Value is defined by considering the supremum of all possible strategies, including the limit as $\delta\rightarrow 0$.
Then, because $\underline{V} \leq \overline{V}$ by definition, the Value must be $\phi^*$. \hfill $\qed$
\end{proof}

The standard Game of Distance to Target is a special case of the Game of Final Location, where the field $\phi$ is given by the distance from a target set:
\begin{equation}
    \phi(\x) = \min_{\z\in\mathcal{T}}\|\x-\z\|.
\end{equation}

\noindent In some works, a Game of Distance to Target is constructed only to find a barrier for a Target Guarding Problem. In others, the Game of Distance to Target is, itself, the topic of interest. In either case, the traditional approach can be quite complicated for general target sets (for an example of an intricate target set, see Figure~\ref{fig:generalTargetGuarding}). Corollary~\ref{cor:gameOfLocation} solves the Game of Degree, and now we turn to the Game of Kind.

\begin{figure}[h]
	\centering
	\includegraphics[width=0.35\textwidth]{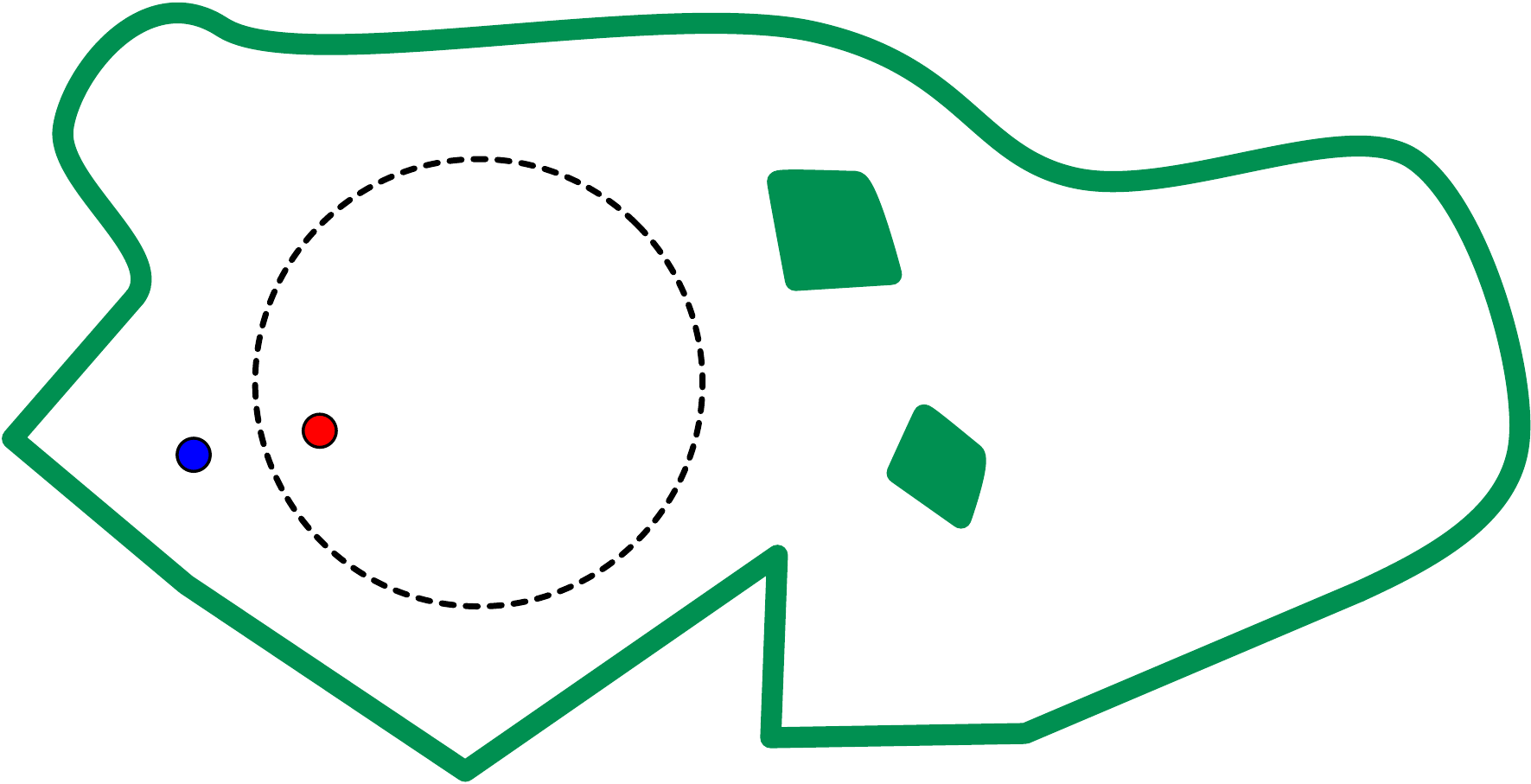}
	\caption{Generalized Target Guarding Problem (where green regions represent the target set). Note that the target set does not act as obstacles.}
	\label{fig:generalTargetGuarding}
\end{figure}

\begin{definition}[Generalized Target Guarding]
We define the \emph{Generalized Target Guarding Problem} as a Game of Kind involving an evader, pursuer, and a target set.
The target set $\mathcal{T}$ is a collection of points and surfaces fixed in space.
The evader wins the game by reaching any point in the target set without being captured by the pursuer, whereas the pursuer wins the game by capturing the evader before it reaches the target set.\footnote{For the purposes of this paper, ties go to the evader.}
\end{definition}

\begin{corr}
For a Generalized Target Guarding Problem, there exists $\delta>0$ such that the strategy presented in Theorem~\ref{Thm:pursuer_strategy} guarantees pursuer's win iff the initial AC does not contain any point in the target set.
\end{corr}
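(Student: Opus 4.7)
The plan is to prove each direction of the biconditional separately.

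For the sufficiency direction (``if''), assume $\mathcal{A}(t_0)\cap \mathcal{T}=\emptyset$. Since $\mathcal{A}(t_0)$ is a closed disc (hence compact) and the target set $\mathcal{T}$ is a closed collection of points and surfaces, disjointness forces a strictly positive separation $d\triangleq \mathrm{dist}(\mathcal{A}(t_0),\mathcal{T})>0$. I would then pick any $\delta\in(0,d)$, so that the inflated disc $\mathcal{C}$ of radius $R_\mathcal{A}(t_0)+\delta$ centered at $\x_\mathcal{A}(t_0)$ also satisfies $\mathcal{C}\cap\mathcal{T}=\emptyset$. By Theorem~\ref{Thm:pursuer_strategy}(i), the strategy~\eqref{eq:purStrat} keeps $\mathcal{A}(t)\subset \mathcal{C}^\circ$ for all $t\ge t_0$. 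It then remains to observe that the evader itself is always inside its own Apollonius disc: a direct computation using $\alpha-1=\beta$ gives $\|\x_E(t)-\x_\mathcal{A}(t)\|=\beta\|\rvec(t)\|\le \gamma\|\rvec(t)\|=R_\mathcal{A}(t)$, since $\beta\le\gamma$. Hence $\x_E(t)\in\mathcal{C}^\circ$, and therefore $\x_E(t)\notin\mathcal{T}$, for all $t\in[t_0,t_{\rm capture}]$. Combined with the finite-time capture bound in Theorem~\ref{Thm:pursuer_strategy}(ii), this gives a pursuer win.

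For the necessity direction (``only if''), I would argue by contrapositive: suppose $\mathcal{A}(t_0)\cap \mathcal{T}$ contains a point $\x^*$. Let the evader use the open-loop strategy of moving in a straight line toward $\x^*$ at its maximum speed $\nu$. By the defining property of the Apollonius disc, every $\x\in\mathcal{A}(t_0)$ satisfies $\|\x-\x_E(t_0)\|/\nu\le \|\x-\x_P(t_0)\|$, so the evader arrives at $\x^*$ no later than any possible pursuer trajectory. If $\x^*$ is in the interior, the evader strictly arrives first; if on the boundary, the two arrive simultaneously, and by the stated tie-breaking convention the evader still wins. Crucially, this winning evader strategy is independent of any pursuer strategy, including the strategy in Theorem~\ref{Thm:pursuer_strategy} for any choice of $\delta>0$. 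Therefore no such $\delta$ can guarantee a pursuer win.

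I expect no serious obstacle. The only non-routine step is the one-line verification that the evader lies inside $\mathcal{A}(t)$, which is immediate from the definitions $\alpha=1/(1-\nu^2)$, $\gamma=\nu\alpha$, $\beta=\nu\gamma$. Beyond that, the ``if'' direction is a clean application of Theorem~\ref{Thm:pursuer_strategy} combined with a compactness-based choice of $\delta$, and the ``only if'' direction reuses the classical Apollonius dominance argument already invoked in the proof of Corollary~\ref{cor:epsNash}.
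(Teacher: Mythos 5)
Your proposal is correct and follows essentially the same route as the paper: for sufficiency, choose $\delta$ strictly below the positive separation between the initial AC and $\mathcal{T}$ (the paper takes $\delta=0.5d^*$) and invoke Theorem~\ref{Thm:pursuer_strategy}; for necessity, use the classical evader-dominance property of the initial AC. Your explicit one-line check that $\x_E(t)\in\mathcal{A}(t)$ (via $\|\x_E-\x_\mathcal{A}\|=\beta\|\rvec\|\le R_\mathcal{A}$) is a detail the paper leaves implicit, but it does not change the argument.
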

\begin{proof}
Necessity follows trivially from longstanding results that the initial AC forms the evader dominance region.

For sufficiency, let $d^*>0$ be the shortest distance between $\partial\mathcal{A}(t_0)$ and any point on the target set.
There can be multiple points on the target set that achieve this shortest distance.
Now we select $\delta=0.5d^*$ to inflate the initial AC and construct the circle $\mathcal{C}$, within which the capture is guaranteed.
Since $\mathcal{C}$ does not intersect with the target set, capture occurs outside of the target set, i.e., before evader reaches the target set. \hfill $\qed$
\end{proof}

\begin{corr}[Multi-pursuer Game]
Suppose there are multiple pursuers, possibly with different speeds, each implementing the strategy in Theorem~\ref{Thm:pursuer_strategy}. Denote the intersection of all the initial ACs as $\mathcal{I}$. Capture will occur arbitrarily close to $\mathcal{I}$, the Value of the Game of Final Location is similarly the maximum value of $\phi$ over $\mathcal{I}$, and the pursuers will win any Target Guarding Game if $\mathcal{I}$ does not contain any point in the target set.
\end{corr}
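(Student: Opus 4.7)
The plan is to apply Theorem~\ref{Thm:pursuer_strategy} separately to each pursuer-evader pair. The one-against-one invariant of Theorem~\ref{Thm:pursuer_strategy}(i) is derived from a worst-case bound on the evader's velocity alone, so it continues to hold for every pursuer $P_i$ regardless of what the other pursuers are doing: running \eqref{eq:purStrat} with its own inflation $\delta_i>0$ keeps $\mathcal{A}_i(t)\subset\mathcal{C}_i^\circ$ up to the first capture instant $t_f$. Since $\|\x_{\mathcal{A}_i}-\x_E\|=\beta_i\|\rvec_i\|\le R_{\mathcal{A}_i}$, the evader always lies in $\mathcal{A}_i(t)$, so $\x_E(t_f)\in\bigcap_i\mathcal{C}_i^\circ$; finiteness of $t_f$ follows from Theorem~\ref{Thm:pursuer_strategy}(ii) applied to any one of the pursuers.

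Next I would show that $\bigcap_i\mathcal{C}_i$ collapses onto $\mathcal{I}=\bigcap_i\mathcal{A}_i(t_0)$ as the $\delta_i$'s are driven to zero, which yields the first claim that capture occurs arbitrarily close to $\mathcal{I}$. Because $\x_E(t_0)$ lies strictly inside every $\mathcal{A}_i(t_0)$, $\mathcal{I}^\circ$ automatically contains a neighborhood of $\x_E(t_0)$, so this shrinkage is well-behaved. The Value claim then mirrors Corollary~\ref{cor:gameOfLocation}: define $\phi^*_{\mathcal{I}}\triangleq\min_{\x\in\mathcal{I}}\phi(\x)$. For the upper value, the evader steers open-loop at max speed toward $\argmin_{\x\in\mathcal{I}^\circ}\phi(\x)$; convexity of each disc $\mathcal{A}_i(t_0)$ keeps the entire straight-line path strictly inside every $\mathcal{A}_i(t_0)$, and the Apollonius reaching-time inequality forbids interception by any pursuer. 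For the lower value, continuity of $\phi$ combined with the shrinkage step lets one pick $\delta_i$'s small enough that $\phi\ge\phi^*_{\mathcal{I}}-\varepsilon$ throughout $\bigcap_i\mathcal{C}_i$. The Target Guarding statement then specializes this with $\phi$ the distance-to-target: $\mathcal{I}\cap\mathcal{T}=\emptyset$, plus compactness of $\mathcal{I}$ and closedness of $\mathcal{T}$, yields a positive gap $d^*$ that is preserved by sufficiently small $\delta_i$'s.

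The principal obstacle is justifying the shrinkage $\bigcap_i\mathcal{C}_i\to\mathcal{I}$. Intersection is not Hausdorff-continuous in general: two nearly tangent ACs inflated by $\delta$ can enlarge their intersection by $O(\sqrt{\delta})$ transverse to the common tangent. However, since $\mathcal{I}^\circ$ is automatically nonempty, a local-continuity argument on the defining inequalities $\|\x-\x_{\mathcal{A}_i}(t_0)\|\le R_{\mathcal{A}_i}(t_0)+\delta_i$ shows every point of $\bigcap_i\mathcal{C}_i$ is within $O(\max_i\delta_i^{1/2})$ of $\mathcal{I}$, which is arbitrarily small for small $\delta_i$, so all three claims go through.
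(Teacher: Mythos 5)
The paper states this corollary without a proof of its own, and your argument---applying Theorem~\ref{Thm:pursuer_strategy} pairwise (valid because each pursuer's guarantee is a worst-case bound against any admissible evader motion, independent of the other pursuers), noting $\x_E(t)\in\mathcal{A}_i(t)\subset\mathcal{C}_i^\circ$ so the capture point and indeed the whole evader trajectory lie in $\bigcap_i\mathcal{C}_i^\circ$, and then collapsing $\bigcap_i\mathcal{C}_i$ onto $\mathcal{I}$ using the common interior point $\x_E(t_0)$---is exactly the intended decentralized reasoning and is sound (with that common interior point a standard convexity argument even gives a Hausdorff gap of $O(\max_i\delta_i)$, so your $O(\max_i\delta_i^{1/2})$ bound is conservative but sufficient). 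Your conclusion that the Value equals $\min_{\x\in\mathcal{I}}\phi(\x)$ is the correct analogue of Corollary~\ref{cor:gameOfLocation}; the word ``maximum'' in the corollary statement should be understood as the largest payoff the pursuers can force, which is this minimum over $\mathcal{I}$.
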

Note that the implementation of the strategy is completely decentralized, and there is no coordination required among the pursuers. In fact, each pursuer does not even need information about its fellow pursuers.

The exposition up to this point assumes that the pursuer has perfect knowledge about the evader's maximum speed $\nu$. 
We can also consider scenarios where the pursuer has some uncertainty in the evader's maximum speed. 
Denote the evader's true maximum speed by $\nut$, the pursuer's estimate of $\nut$ by $\Tilde{\nu}$, and with slight abuse of notation, denote $\mathcal{C}^o(\nu)$ to be the circular region defined for a speed ratio of $\nu$. 
It can be verified that $\mathcal{C}^o(\nu_1) \subset \mathcal{C}^o(\nu_2)$ if $\nu_2 > \nu_1$, and the next corollary follows trivially from Remark~\ref{rem:lowSpeed}.

\begin{corr}[Uncertain Maximum Speed] \label{corr:robust_strategy}
If $\Tilde{\nu} \ge \nut$, the pursuer strategy \eqref{eq:purStrat} guarantees that the evader does not reach any point outside $\mathcal{C}^o(\Tilde{\nu})$.
\end{corr}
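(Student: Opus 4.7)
The plan is to reduce Corollary~\ref{corr:robust_strategy} directly to Theorem~\ref{Thm:pursuer_strategy} via the observation already formalized in Remark~\ref{rem:lowSpeed}. First I would take the pursuer's point of view and rebuild every quantity used in Theorem~\ref{Thm:pursuer_strategy} with $\tilde{\nu}$ in place of $\nu$: the constants $\alpha, \beta, \gamma$, the Apollonius disc $\mathcal{A}(t)$, the inflated target disc $\mathcal{C}^o(\tilde{\nu})$, the auxiliary vectors $\z_P, \z_E, \y$, and the Lyapunov-like function $V$. The control law \eqref{eq:purStrat} is then applied exactly as in Theorem~\ref{Thm:pursuer_strategy}, only with these tilde-quantities substituted into its definition.

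Next I would revisit the single step in the proof of Theorem~\ref{Thm:pursuer_strategy}(i) at which the evader's speed bound enters, namely the Cauchy--Schwarz estimate $-\z_E(t)\T \vvec_E(t) \ge -\|\z_E(t)\|\,\tilde{\nu}$. Under the hypothesis $\tilde{\nu} \ge \nut$, any admissible evader velocity satisfies $\|\vvec_E(t)\| \le \nut \le \tilde{\nu}$, so the estimate remains valid. Every subsequent inequality in the proof of Theorem~\ref{Thm:pursuer_strategy}(i) then carries over verbatim, yielding $\dot V(t) \ge 0$, $\|\z_P(t)\| > 0$ for all $t \ge t_0$, and the containment $\mathcal{A}(t) \subset \mathcal{C}^o(\tilde{\nu})^\circ$ for all $t \ge t_0$.

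Finally I would combine this containment with the one-line fact that $\x_E(t) \in \mathcal{A}(t)$ for every $t$ (using $1 - \alpha = -\beta$ to get $\x_E - \x_\mathcal{A}(t) = -\beta \rvec$ together with $\beta \le \gamma$), which gives $\x_E(t) \in \mathcal{C}^o(\tilde{\nu})^\circ$ for all $t \ge t_0$ and thereby the desired conclusion. I do not anticipate any genuine obstacle: the worst-case analysis underlying Theorem~\ref{Thm:pursuer_strategy} already absorbs any evader strategy whose speed never exceeds the pursuer's assumed bound, so the corollary is essentially a notational restatement of Remark~\ref{rem:lowSpeed} with $\tilde{\nu}$ in place of $\nu$.
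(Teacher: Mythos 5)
Your proposal is correct and follows essentially the same route as the paper, which simply invokes Remark~\ref{rem:lowSpeed}: since the true evader speed never exceeds $\Tilde{\nu}$, Theorem~\ref{Thm:pursuer_strategy} applied with $\Tilde{\nu}$ in place of $\nu$ gives containment of $\mathcal{A}(t)$ in $\mathcal{C}^o(\Tilde{\nu})$ and hence of the evader. Your explicit check that the speed bound enters only through the Cauchy--Schwarz step, and your one-line verification that $\x_E(t)\in\mathcal{A}(t)$ via $\x_E-\x_\mathcal{A}=-\beta\rvec$ with $\beta\le\gamma$, merely spell out details the paper leaves implicit.
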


The tighter the estimate $\Tilde{\nu}$ is, the smaller the guaranteed capture region $\mathcal{C}^o(\Tilde{\nu})$ will be. Therefore, while having a smaller value for $\Tilde{\nu}$ ensures capture within a smaller region, it also increases the risk of $\Tilde{\nu}$ being smaller than $\nut$ and consequently not capturing at all. 
On the other hand, a higher value of $\Tilde{\nu}$ reduces the risk of $\Tilde{\nu}$ being smaller than $\nut$ but it also increases the guaranteed capture region.
Thus, in context of the Game of Final Location, a natural trade-off between the risk of not capturing at all and the area of the guaranteed capture region (and thus, perhaps an increase in the final value) arises automatically through the pursuer's estimate $\Tilde{\nu}$. Furthermore, this simple result only considers the case where the pursuer makes one estimate at the beginning of the game and commits to a strategy based on that estimate. How/when the pursuer should update $\Tilde{\nu}$ (coupled with how the evader should structure his strategy to counter the pursuer's update law) remains an open question.

We finish with the Generalized Target Guarding version of Corollary~\ref{corr:robust_strategy}. With a now familiar abuse of notation, let $\mathcal{A}(t;\nu)$ denote the AC computed based on the positions at time $t$ using a particular value of $\nu$, whose true value may be unknown to the players.
\begin{corr}[Critical Speed]\label{corr:critical_speed}
There exists a pursuer strategy to win the Generalized Target Guarding problem if and only if $\nut<\nu_\text{crit}$, where $\nu_\text{crit}<1$ is the smallest $\nu$ such that
\begin{equation}
    \min_{\x\in \mathcal{A}(t_0; \nu)}\phi(\x)=0.
\end{equation}
\end{corr}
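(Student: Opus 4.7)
The plan is to split the biconditional into sufficiency and necessity, both built on the elementary monotonicity $\mathcal{A}(t_0;\nu_1)\subseteq\mathcal{A}(t_0;\nu_2)$ for $\nu_1\le\nu_2$. I would first record this fact from the pointwise characterization $\x\in\mathcal{A}(t_0;\nu)\iff \|\x-\x_E(t_0)\|\le\nu\|\x-\x_P(t_0)\|$ and observe that, together with continuity of $\phi$, it makes $f(\nu)\triangleq\min_{\x\in\mathcal{A}(t_0;\nu)}\phi(\x)$ a non-increasing continuous function of $\nu$. Minimality of $\nu_\text{crit}$ as a zero of $f$ then yields $f(\nu)>0$ for every $\nu<\nu_\text{crit}$.

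For sufficiency ($\nut<\nu_\text{crit}$), I would have the pursuer pick any $\tilde\nu\in[\nut,\nu_\text{crit})$ together with an inflation $\delta\in(0,f(\tilde\nu))$, and deploy the feedback law of Theorem~\ref{Thm:pursuer_strategy} tuned to $\tilde\nu$. Since $\tilde\nu\ge\nut$, Corollary~\ref{corr:robust_strategy} confines the evader to $\mathcal{C}^o(\tilde\nu)$. Because $\mathcal{C}^o(\tilde\nu)$ inflates $\mathcal{A}(t_0;\tilde\nu)$ radially by $\delta<f(\tilde\nu)$, the region $\mathcal{C}^o(\tilde\nu)$ is disjoint from $\mathcal{T}$, so capture necessarily occurs before the evader can reach the target.

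For necessity ($\nut\ge\nu_\text{crit}$), monotonicity supplies a point $\z^\star\in\mathcal{A}(t_0;\nu_\text{crit})\cap\mathcal{T}$ which also belongs to $\mathcal{A}(t_0;\nut)$. The evader would then play the open-loop strategy of moving at maximum speed straight toward $\z^\star$; since $\z^\star$ lies in the initial evader dominance region, the evader arrives at $\z^\star$ no later than any pursuer trajectory can intercept it, thus winning the Generalized Target Guarding game.

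The main obstacle I anticipate is the knife-edge case $\nut=\nu_\text{crit}$. At the critical speed $\z^\star$ lies on $\partial\mathcal{A}(t_0;\nut)$, so the evader and the pursuer arrive at $\z^\star$ simultaneously. Closing out the necessity direction at this value requires the tie-goes-to-the-evader convention recorded in the footnote attached to the Generalized Target Guarding definition; absent that convention the ``only if'' would weaken from $\nut\ge\nu_\text{crit}$ to the strict $\nut>\nu_\text{crit}$.
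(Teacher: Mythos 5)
Your proposal is correct, and it is worth noting how it relates to the printed proof. The paper's argument addresses only the pursuer side: it has the pursuer play Theorem~\ref{Thm:pursuer_strategy} with an underestimate $\tilde\nu=\nu_\text{crit}-\Delta$ and exhibits one explicit inflation $\delta$ (a formula comparing the $\nu_\text{crit}$- and $\tilde\nu$-Apollonius discs), asserting this wins for all $\nut\le\nu_\text{crit}-\Delta$; the ``only if'' direction is left implicit, resting on the earlier corollary's observation that the initial AC is the evader's dominance region. Your sufficiency argument is the same in spirit (robust strategy of Corollary~\ref{corr:robust_strategy} with $\tilde\nu\in[\nut,\nu_\text{crit})$), but your choice $\delta<f(\tilde\nu)=\min_{\x\in\mathcal{A}(t_0;\tilde\nu)}\phi(\x)$ is cleaner: since $\phi$ is $1$-Lipschitz (distance to $\mathcal{T}$) and $\mathcal{C}^o(\tilde\nu)$ is the radial $\delta$-inflation of $\mathcal{A}(t_0;\tilde\nu)$, disjointness from $\mathcal{T}$ is immediate, whereas the paper's specific constant is stated without verifying that the resulting $\mathcal{C}$ avoids the target. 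You also supply what the paper omits: the necessity direction via monotonicity of $\mathcal{A}(t_0;\nu)$ in $\nu$ and the straight-line dominance argument (every pre-arrival point of the evader's segment lies in the open disc, so strictly earlier capture is impossible), together with the correct observation that the boundary case $\nut=\nu_\text{crit}$ is settled by the paper's tie-goes-to-the-evader footnote. The only (harmless) technicality to flag is that extracting $\z^\star\in\mathcal{T}$ from $\phi(\z^\star)=0$ uses closedness of the target set, which the paper implicitly assumes when writing $\phi$ as a minimum over $\mathcal{T}$.
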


\begin{proof}
For any $\Delta>0$, the pursuer plays the game with the strategy from Theorem~\ref{Thm:pursuer_strategy}, assuming $\Tilde{\nu}=\nu_\text{crit}-\Delta$. Then $\delta=\left(\frac{\nu_\text{crit}}{(1-\nu_\text{crit})^2}-\frac{\Tilde{\nu}}{(1-\Tilde{\nu})^2}\right)\|\rvec(t_0)\|>0$ guarantees a win for $\nut\leq\nu_\text{crit}-\Delta$. 
Such $\delta$ always exists since we have $\delta\rightarrow 0$ with $\Delta\rightarrow 0$. \hfill $\qed$

\end{proof}

This has implications for the Generalized Target Guarding Game with uncertainty in the speed ratio. Suppose the pursuer does not know the maximum speed of the evader. By playing the game assuming $\Tilde{\nu}$, the pursuer can lose only when $\nut>\nu_\text{crit}-\Delta$. Importantly, since the evader is guaranteed to win anyway when $\nut>\nu_\text{crit}$, only an infinitesimal parameter regime, $\nut\in\left(\nu_\text{crit}-\Delta,\nu_\text{crit}\right]$ remains in question. Therefore, in contrast to the Game of Final Location, the pursuer can (and the authors believe, should) avoid worrying about actively estimating $\nut$ for Generalized Target Guarding in favor of simply choosing $\Tilde{\nu}$ arbitrarily close to $\nu_\text{crit}$. The pursuer gains no advantage by performing such an estimate. Furthermore, by not using an estimator, it prevents any adversarial action based on exploiting the estimator.

\section{Conclusion and Limitations}
In this work, we provide a sufficient strategy for the pursuer which guarantees that a slower evader will be captured within a neighborhood arbitrarily close to the Apollonius circle constructed at the beginning of the game. 
In contrast to existing strategies in the literature, our strategy eliminates the nuances of considering particular problem geometries separately and their resultant singular surfaces.

The results of this paper do not apply to cases where the agents have more complex dynamics (higher order, nonholonomic, state spaces with other geometries), in games with a non-zero capture radius, or where obstacles are present. This is because the AC no longer accurately captures the dominance region. Furthermore, when the evader is faster than the pursuer, i.e., $\nu>1$, it is easy to see that capture cannot be enforced with only one pursuer. Multiple pursuers will have to cooperate to guarantee capture.
Finally, other games, such as the Game of Capture Time, still require further treatment.

\bibliographystyle{unsrt}
\bibliography{refs}

\appendix

\section{Two target problem leading to oscillation and evasion}\label{sec:twoGoal}

\begin{figure}[h]
\begin{subfigure}{0.4\linewidth}
\begin{tikzpicture}[scale=1.5]
    \tikzmath{\yD = 0; \yA = .8; \tnu=.677; \tdelta=.1;
    % Calc
    \talpha=1/(1-\tnu^2); \tgamma=\talpha*\tnu; \tbeta=\tgamma*\tnu;
    \yC=\talpha*\yA-\tbeta*\yD; \tr=\yA-\yD; \tRC=\tgamma*\tr; \tRF=\tRC+\tdelta;}

	\draw[red,fill] (0,\yA) circle[radius=1pt] node [anchor=west] {$\x_E$};
	\draw[blue,fill](0,\yD) circle[radius=1pt] node [anchor=west] {$\x_P$};
	
	\draw[](0,\yC) circle[radius=\tRC];
	
	\tikzmath{\gx=\tRC+.3;}
	\draw[target,very thick](\gx,0)--(\gx,\yC+\tRC);
	\draw[target,very thick](-\gx,0)--(-\gx,\yC+\tRC);
	\draw[black,fill](-\tRC,\yC) circle[radius=1pt] node [anchor=south west] {$I_1$};
	\draw[black,fill](\tRC,\yC) circle[radius=1pt] node [anchor=south east] {$I_2$};
	\draw[](-\tRC,\yC)--(-\gx,\yC) node [anchor=south west] {$d_1$};
	\draw[](\tRC,\yC)--(\gx,\yC) node [anchor=south east] {$d_2$};
	
	\draw[-{Latex[length=2mm]},dashed,blue] (0,\yD)--(-\tRC,\yC);
	\draw[-{Latex[length=2mm]},dashed,blue] (0,\yD)--(\tRC,\yC);
	\draw[-{Latex[length=2mm]},dashed,red] (0,\yA)--(-\tRC,\yC);
	\draw[-{Latex[length=2mm]},dashed,red] (0,\yA)--(\tRC,\yC);
	
\end{tikzpicture}\caption{Two possible strategies.}\label{fig:twoStrat}
\end{subfigure}
\hspace{20 pt}
\begin{subfigure}{0.4\linewidth}
\begin{tikzpicture}[scale=1.5]
    \tikzmath{\yD = 0; \yA = .8; \tnu=.677; \tdelta=.1;
    % Calc
    \talpha=1/(1-\tnu^2); \tgamma=\talpha*\tnu; \tbeta=\tgamma*\tnu;
    \yC=\talpha*\yA-\tbeta*\yD; \tr=\yA-\yD; \tRC=\tgamma*\tr; \tRF=\tRC+\tdelta;}

	\draw[red,fill] (0,\yA) circle[radius=1pt];
	\draw[blue,fill](0,\yD) circle[radius=1pt];
	
	\draw[](0,\yC) circle[radius=\tRC];
	
	\tikzmath{\gx=\tRC+.3;}
	\draw[target,very thick](\gx,0)--(\gx,\yC+\tRC);
	\draw[target,very thick](-\gx,0)--(-\gx,\yC+\tRC);
	\draw[black,fill](-\tRC,\yC) circle[radius=1pt] node [anchor=south west] {$I_1$};
	\draw[black,fill](\tRC,\yC) circle[radius=1pt] node [anchor=south east] {$I_2$};
	\draw[](-\tRC,\yC)--(-\gx,\yC);
	\draw[](\tRC,\yC)--(\gx,\yC);
	
	\draw[-{Latex[length=2mm]},dashed,blue] (0,\yD)--(\tRC,\yC);
	\draw[-{Latex[length=2mm]},dashed] (0,\yD)--(0,\yA);
	\tikzmath{\a1=55;}
	\coordinate (E) at (0, \yA);
	\coordinate (P) at (0, \yD);
	\coordinate (I) at (\tRC, \yC);
	\pic ["$\psi$",-,draw,color=black,angle radius=18pt,angle eccentricity=1.5] {angle=I--P--E};
\end{tikzpicture}
\caption{Angle if P goes to $I_2$}\label{fig:twoGoalAngle}
\end{subfigure}

\caption{Two target problem on singular surface.}
\label{fig:twoGoal}
\end{figure}
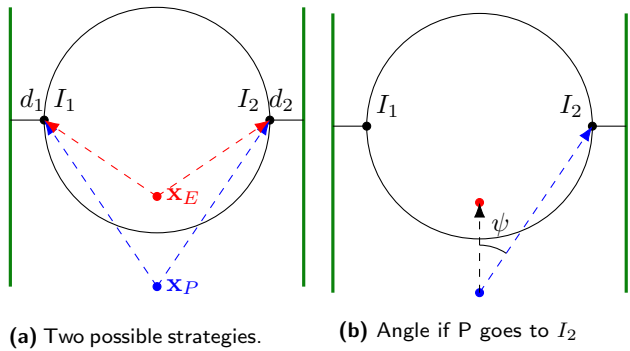

Similar to \cite{shishika2021partial}, consider an example where there are two parallel vertical target regions (instead of circular target regions), as shown in Figure~\ref{fig:twoGoal}. A typical differential game formulation may discover equilibrium strategies where both agents head directly to the point on the AC that is closest to one of the two targets (i.e., $J = \max_{\vvec_P} \min_{\vvec_E} \min \left(d_1, d_2\right)$). If $d_1<d_2$, then both agents move directly to $I_1$. Conversely, if $d_1>d_2$, then both agents move directly to $I_2$.
When both agents aim at the same point, the point remains fixed in space. With the cost defined as $\min(d_1, d_2)$, the surface where $d_1 = d_2$ is a dispersal surface -- the agents' equilibrium control inputs are undefined.
Crucially, the nonsingular strategies seem to be subgame perfect -- if $d_1<d_2$ and both agents choose to move toward $I_1$, then $d_1<d_2$ continues to hold for the duration of the game (similar for $d_1>d_2$) -- i.e., there is no perpetuated dilemma~\cite{isaacs1965differential}.
However, a critical question remains.

\begin{question}
What happens on the singular surface?
\end{question}

We will discuss three different possible answers.

\begin{answer}\label{ans:IMS}
Both agents choose an instantaneous mixed strategy.
\end{answer}

This is the natural first answer, especially considering the subgame perfect nature of the nonsingular strategies. Both agents pick either $I_1$ or $I_2$, say with 50\% probability for the first instant. Immediately thereafter, either $d_1<d_2$ or $d_1>d_2$, and the agents' equilibrium control actions for the remainder of the game are well-defined; this is true whether the agents choose the same aim point or not.

However, there is a problem with this reasoning. In a Nash equilibrium, no player has anything to gain by changing only their own strategy. We will show by contradiction that Answer~\ref{ans:IMS} is not actually a Nash equilibrium.

We can view the pursuer's strategy (head directly to $I_1$ or $I_2$, choosing an IMS between these options on the singular surface) as a bang-bang type strategy. That is, there are only two possible \textit{actions} he may choose to take. The question then is: holding the pursuer's strategy constant, is there anything else the evader may prefer to do? Indeed, there is. Consider what happens if the evader chooses to move directly up. The pursuer's strategy will chatter back and forth across the singular surface, and the system will remain at/near the singularity.

This behavior is similar to that of~\cite{pachter2019singular}, but can be much more dangerous. Consider the distance between the two agents $r=\lVert \mathbf{r}\rVert$. The time derivative of this distance is
\begin{equation*}
    \dot{r}=\mathbf{r}\cdot(\mathbf{v}_E-\mathbf{v}_P).
\end{equation*}
In this vertical arrangement on the singular surface, if the evader heads straight up, $\hat{\mathbf{r}}\approx\hat{\mathbf{v}}_E$. If the pursuer heads toward $I_2$, for example (shown in Figure~\ref{fig:twoGoalAngle}), then $\cos{\psi}\approx\frac{1}{\sqrt{\nu^2+1}}$ and

\begin{equation}
    \dot{r}\approx r\left(\nu-\frac{1}{\sqrt{\nu^2+1}}\right).\nonumber
\end{equation}

Notice that $\dot{r}$ becomes positive when $\nu$ is sufficiently large ($>\approx0.786$). In this case, the result of the pursuer control chattering is that the distance between the agents will always \textit{increase} (at least until the AC grows to intersect a target region, at which point, there exists an evader strategy to win). A video of a discretized simulation of this behavior can be viewed at \url{https://youtu.be/7J_8l_Uy6f8}.

This result is similar to that in Section 4 of \cite{vonmoll2019robust}, and as in that paper, a full treatment of this scenario requires solutions to differential equations with discontinuous right-hand sides (i.e. in the sense of Filippov~\cite{filippov2013differential}) due to the fast switching behavior of the pursuer. That said, supposing that the policy does result in a continuous solution, we have seen that the evader \textit{can} benefit from choosing a different strategy (go straight up instead of toward $I_1$ or $I_2$), and so Answer~\ref{ans:IMS} cannot be a Nash equilibrium.

\begin{answer}
The pursuer heads straight toward the evader.
\end{answer}

The analysis for this answer likely follows closer to~\cite{pachter2019singular}, and it resembles adding a form of `deadzone' into the pursuer's policy. We assume that the pursuer commits to heading straight toward the evader for some finite time $T$ once the state is on or sufficiently close to the singular surface. This commitment may allow the pursuer to alleviate the concern of Answer~\ref{ans:IMS} at the cost of some penalty in the reward function for the period of time that the pursuer is heading straight toward the evader ($J$ can be decreasing during this time). $T$ may need to be calibrated according to $\nu$.

Further, unlike \cite{pachter2019singular}, a full analysis of this answer would require us to (after specifying the pursuer's committed strategy) determine what the best evader counter-strategy is. Should the evader simply harvest the small reward gain and then proceed to a critical point? Can it instead stay near enough to the singular surface to occasionally (and advantageously) trigger the pursuer's deadzone? These questions would require significant effort, likely repeated for a plethora of different geometries, and it is clear why this paper's main result is to be preferred.

\begin{answer}
A viscosity solution.
\end{answer}

There has been great historical effort in describing viscosity solutions to differential games on singular surfaces and connecting them to dynamic programming problems, such that they can be estimated using numerical methods~\cite{falcone2006viscosity}. However, these methods are still difficult in practice for many problems of interest~\cite{lafflito2017viscosity}, and even if solved, generally only provide numerical results. The authors believe that these difficulties are easily overcome for many problems by using the result of this paper instead.

\end{document}